\tikzset{->-/.style={decoration={markings,mark=at position #1 with {\arrow{>}}},postaction={decorate}}}
\numberwithin{equation}{section}
\newtheorem{theorem}{Theorem}[section]
\newtheorem{lemma}[theorem]{Lemma}
\newtheorem{proposition}[theorem]{Proposition}
\newtheorem{corollary}[theorem]{Corollary}
\theoremstyle{definition}
\newtheorem{definition}[theorem]{Definition}
\newtheorem{fact}[theorem]{Fact}
\newtheorem{def-prop}[theorem]{Definition-Proposition}
\newtheorem{remark}[theorem]{Remark}
\newtheorem{example}[theorem]{Example}
\newtheorem{notation}[theorem]{Notation}
\newtheorem{question}[theorem]{Question}
\newtheorem*{Mysketch}{Sketch of proof} 
  {\pushQED{\qed}\begin{Mysketch}}
  {\popQED\end{Mysketch}}
  \newtheorem*{Myproof}{Proof of the Claim} 
  {\pushQED{\qed}\begin{Myproof}}
  {\popQED\end{Myproof}}
\DeclareMathOperator{\pol}{pol}
\DeclareMathOperator{\Tor}{Tor}
\DeclareMathOperator{\reg}{reg}
\DeclareMathOperator{\Ann}{Ann}
\DeclareMathOperator{\Ass}{Ass}
\DeclareMathOperator{\height}{ht}
\DeclareMathOperator{\link}{link}
\DeclareMathOperator{\depth}{depth}
\DeclareMathOperator{\pd}{pd}
\DeclareMathOperator{\st}{st}
\DeclareMathOperator{\susp}{susp}
\newcommand{\G}{\mathcal{G}}
\newcommand{\ZZ}{{\mathbb Z}}
\newcommand{\NN}{{\mathbb N}}
\def\pp{{\frak p}}
\def\G{{\mathcal G}}
\def\G{{\mathcal G}}
\def\a{{\bf a}}
\def\1{{\bf 1}}
\def\0{{\bf 0}}
\begin{document}

\title{On algebraic and combinatorial properties of weighted simplicial complexes}

\author{Selvi Kara}
\address{Department of Mathematics and Statistics, University of South Alabama,  411 University Blvd North, Mobile, AL 36688-0002, USA}
\email{selvi@southalabama.edu}
\urladdr{}

\begin{abstract} 
Weighted simplicial complexes (WSCs) are powerful tools for describing weighted cloud data or networks with weighted nodes. In this paper, we propose a novel approach to study WSCs via the concept of polarization. Polarization of a WSC allows one to construct a new (unweighted) simplicial complex which coincides with an object called the mixed wreath product. This new construction preserves several properties and invariants of the underlying simplicial complex of a WSC. Our main focus is to analyze WSCs through their underlying simplicial complexes and mixed wreath products. Combinatorially, we investigate properties such as vertex-decomposability, shellability, constructibility; algebraically, we study Betti numbers, associated primes and primary decompositions of ideals associated to WSCs.
 
\end{abstract}

\subjclass[2010]{05E40, 05E45, 13F55, 55U10}
\keywords{Weighted Simplicial Complex; Mixed Wreath Product; Polarization; Betti Numbers}

\maketitle


\section{Introduction}


Simplicial complexes are versatile objects due to their combinatorial nature and used in many fields of mathematics such as commutative algebra and topology. They are also important tools for describing network structures such as protein networks \cite{ESTRADA201846},  brain networks \cite{giusti2016two}, collaboration networks \cite{patania2017shape, ramanathan2011beyond}, and many other structures in which there are interactions between nodes.  In applications, however, simplicial complexes are often weighted: for instance, in scientific collaboration networks, teams of collaborators can be weighted by a measure of the strength of their collaboration (e.g.,  \cite{courtney2017weighted}).  Recently, weighted simplicial complexes have become popular in topological data analysis (TDA)  to study weighted cloud data by replacing the data with a family of weighted simplicial complexes (see \cite{courtney2017weighted, giusti2016two, sharma2017weighted}).




The central purpose of this paper is to study weighted simplicial complexes (WSCs)  through associated  combinatorial and algebraic objects. We construct a new (unweighted) simplicial complex from a WSC via polarization which keeps many properties and invariants of the underlying simplicial complex of a WSC unchanged.  In particular, we
\begin{itemize}
\item consider \emph{mixed wreath products} of simplicial complexes and derive some of their properties  (\Cref{mixedwreath}),
\item  introduce the concept of polarization for WSCs;  polarization of a WSC coincides with the mixed wreath product (\Cref{Polar}), and
\item  study monomial ideals associated to WSCs from a broader perspective through  \emph{weighting} operation of monomial ideals  (\Cref{weightedmonomial}).
\end{itemize}

There are several different ways to define a WSC in the literature (see  \cite{dawson1990homology},\cite{giusti2016two},  \cite{kannan2019persistent}) and we adopt the following definition in this paper.


\begin{definition}\label{def:WSC}
Let $\Delta$ be a simpicial complex on the vertex set $V$ and let $w: V \rightarrow \NN$ be a function called the \emph{weight function} which assigns a weight to each vertex of $\Delta.$  A \emph{weighted simpicial complex} is a pair $(\Delta,w)$ consisting of a simplicial complex $\Delta$ and a weight function $w$ on the set of vertices of $\Delta.$ We call $\Delta$ the \emph{underlying simplicial complex} of $(\Delta,w).$
\end{definition}


WSCs were first introduced in 1990 by Dawson \cite{dawson1990homology} in which a weight function is defined on a simplicial complex together with a divisibility condition for face weights. Dawson also established weighted simplicial homology which coincides with the simplicial homology when all the faces have the same weight. However, it differs from the simplicial homology in several ways, for instance, it is possible for the zeroth weighted homology group of a WSC to have torsion.  We refer the reader to \cite{dawson1990homology} for more details.


Apart from the use of WSCs as models in TDA, there is not much known about the combinatorial or algebraic nature of WSCs. One of the few results concerning WSCs is that the Stanley-Reisner ring of a WSC is isomorphic to the integral cohomology ring of a special polyhedral product called the weighted Davis-Januszkiewicz space (see \cite[Theorem 10.5]{bahri2015operations}). Our approach to study WSCs concerns their underlying simplicial complexes and the mixed wreath products. 

In \Cref{mixedwreath},  we focus on the construction of the mixed wreath product of a simplicial complex $\Delta$ and effects of this construction on the properties of $\Delta.$  The key idea behind the mixed wreath product is the one-point suspension introduced by Provan and Billera in \cite{provan1980decompositions}; the first generalization of one-point suspensions called  \emph{wreath products}  were studied in \cite{joswig2005one}. The main results of this section given below generalize results from \cite{joswig2005one}. In particular, we provide formulas for the $f$-vectors of the mixed wreath product in terms of the underlying simplicial complex.

(\textbf{\Cref{prop:fvector}}) \emph{Given a $d$-dimensional simplicial complex $\Delta$ on $n$ vertices and positive integers $ d_1, \ldots, d_n$,  the $f$-vector of the mixed wreath product $\partial \Delta_{(d_1,\ldots,d_n)} \wr \Delta$ can be expressed in terms of $f$-vectors of $\Delta.$  In particular, we have
$$f_0 (\partial \Delta_{(d_1,\ldots,d_n)} \wr \Delta) = \sum_{i=1}^n d_i +f_0(\Delta), \text{ and } $$
$$\displaystyle f_{\tilde{d} }  (\partial \Delta_{(d_1,\ldots,d_n)} \wr \Delta)  = \sum_{i=1}^{f_{d} (\Delta)} \prod_{j=1}^{n-d-1} (d_{i,j}+1)$$
where  $\displaystyle \tilde{d} := \dim ( \partial \Delta_{(d_1,\ldots,d_n)} \wr \Delta ) = \sum_{i=1}^n d_i+ d$  and complement of $F_i$ is a set $ \{ v_{i,1}, \ldots, v_{i,n-d-1}\}$ for each $d$-dimensional facet $F_i$ for $1\leq i \leq f_{d} (\Delta) .$}

In addition to the above proposition, we investigate the properties preserved under the mixed wreath product construction.

(\textbf{\Cref{cor:properties}}) \emph{Let $\Delta$ be a simplicial complex on $n$ vertices and $ d_1, \ldots, d_n$ be  positive integers. Suppose $\mathcal{P}$ is one of the following properties:
\begin{itemize}
\item vertex-decomposable,
\item shellable, 
\item constructible.
\end{itemize}
 The mixed wreath product  $\partial \Delta_{(d_1,\ldots,d_n)} \wr \Delta$ of $\Delta$ is $\mathcal{P}$ if and only $\Delta$  is $\mathcal{P}.$ }



A central notion introduced in this paper is the  concept of polarization defined in \Cref{Polar}.  Polarization is an operation applied to a WSC that results in  the mixed wreath product of the underlying simplicial complex studied in \Cref{mixedwreath}. The key idea behind polarization is to view each weighted vertex of  a WSC as a full simplex. During the polarization process, we replace a vertex of weight $d$  with a full simplex on $d$-vertices while connecting  boundaries of each full simplex to the rest of the ``polarized" simplicial complex. 
%

Polarization of a WSC is inspired by the monomial polarization operation and can be seen as its combinatorial analogue.  As in the Stanley-Reisner theory, one can associate each WSC with a monomial ideal called the \emph{Stanley-Reisner ideal}. Thus, two polarizations are naturally related via Stanley-Reisner ideals (\Cref{cor:pol1}).  Through this connection, we obtain a list of important shared properties between Stanley-Reisner ideals of a WSC and its polarization (\Cref{cor:pol}).



The results from \Cref{mixedwreath}  and \Cref{Polar} provide a  glimpse of the combinatorial nature of WSCs. In \Cref{weightedmonomial}, we turn our attention towards the algebraic investigation of WSCs through Stanley-Reisner ideals.

Stanley-Reisner ideals of a WSC $(\Delta,w)$ and its underlying complex $\Delta$ are related through a monomial operation called \emph{weighting}. More specifically, the Stanley-Reisner ideal of $(\Delta,w)$ is a \emph{weighted ideal} of the Stanley-Reisner ideal of $\Delta.$  In  \Cref{weightedmonomial},  we follow a more general treatment of the weighting operation by focusing on monomial ideals (i.e., not only Stanley-Reisner ideals of simplicial complexes)  and their weighted ideals. A main result of this section,  presented below, relates the Betti numbers of a monomial ideal and its weighted ideal. 

(\textbf{\Cref{cor:betti}}) \emph{Let $I \subseteq R=k[x_1,\ldots, x_n]$ be a monomial ideal and $(I,w)$ be its weighted ideal in $R$ where $w_i:= w(x_i)$ for $1 \leq i \leq n.$ Then
$$\beta_{i, (j_1, \ldots, j_n)} (I) = \beta_{i,(w_1j_1, \ldots, w_n j_n)} ((I,w))$$
 for each $(j_1, \ldots, j_n) \in \mathbb{N}^n.$ }


Betti numbers contain important information related to the structure and shape of  many objects from algebra, topology and geometry. For example, in TDA, one can encode the persistent homology of the data set in a ``barcode" using Betti numbers (see \cite{edelsbrunner2008persistent, ghrist2008barcodes}). In the case of weighted cloud data, current computations of weighted persistent homology (\cite{meng2020weighted, ren2018weighted}) are based on the homology theory developed in  \cite{dawson1990homology}, which results in vanishing homologies for certain WSCs regardless of the topological structure of the underlying simplicial complex (\cite[Theorem 3.1]{dawson1990homology}). In contrast, \Cref{cor:betti} provides all Betti numbers of a WSC without introducing new homology theories; moreover, it preserves the topological information of the underlying simplicial complex otherwise lost by methods of  \cite{dawson1990homology}.  


As a natural consequence of  \Cref{cor:betti}, we establish equality of certain algebraic invariants and properties of a monomial ideal and its weighted ideal. In addition, by using completely different techniques, we  provide more compact proofs to several results from \cite{sayedsadeghi2018normally} concerning associated primes, irredundant primary decompositions, normal torsion freeness, and the weighting operation.

Our paper is organized as follows. In \Cref{ch:pre}, we collect the relevant terminology that will be used in the paper. In \Cref{mixedwreath}, we study mixed wreath products of simplicial complexes. In \Cref{Polar}, we introduce the polarization of a weighted simplicial complex and obtain several relations between the Stanley-Reisner ideals of a WSC and its polarization. In \Cref{weightedmonomial}, we examine the effects of the weighting operation on algebraic invariants and the structure of monomial ideals.

\section{Preliminaries}\label{ch:pre}
In this section, we collect the necessary notation and terminology from Stanley-Reisner theory and commutative algebra. 

\subsection{Stanley-Reisner theory.} A  \emph{simplicial complex} $\Delta$ on a (finite) vertex set $V= V(\Delta)$ is a collection of subsets $F \subseteq V(\Delta)$ called  \emph{faces}  (or simplices) with the property that if $F \in \Delta $ and $G \subseteq F,$ then $G \in \Delta.$  The \emph{dimension}  of a face is $\dim (F) =|F| -1$ and the dimension of $\Delta$ is $\dim (\Delta) = \max \{ \dim (F) ~|~ F \in \Delta \}.$ A \emph{facet} of $\Delta$ is a maximal face under inclusion, and we say that $\Delta$ is \emph{pure} if all of its facets have the same dimension.

A \emph{minimal non-face} of $\Delta$ is a set $M \subseteq V$ with $M \notin \Delta,$ but $M\setminus i \in \Delta$ for every $i \in M.$ Note that a face of $\Delta$ may be described as a set not containing a minimal non-face; hence, $\Delta$ is completely determined by its minimal non-faces.  We denote the set of all minimal non-faces of $\Delta$ by $\mathcal{M}(\Delta).$

Let $R=k[x_1,\ldots, x_n]$ be a polynomial ring.  The \emph{Stanley-Reisner ideal} of a simplicial complex $\Delta$ on a vertex set $V(\Delta) =  [n]=\{1,\ldots, n\}$  is an ideal in $R$ defined as
$$I_{\Delta} = \Big( \prod_{i \in F} x_i ~:~ F \in \mathcal{M}(\Delta) \Big)$$
where vertex $i$ of $\Delta$ is identified with variable $x_i$ in $R$ for each $1\leq i \leq n.$ The quotient ring $R/I_{\Delta}$ is called the \emph{Stanley-Reisner ring} of $\Delta$ and denoted by $k[\Delta].$ 


The \emph{full $d$-simplex},  denoted by $\Delta_{d},$ is the simplicial complex consisting of all subsets of $[d+1]$ and its dimension is equal to $d.$ Its boundary $\partial \Delta_{d}$ is the simplicial complex on $[d+1]$ consisting of all proper subsets of $[d+1]$ and the dimension  of the boundary is equal to $d-1.$

The \emph{join} of two simplicial complexes $\Delta$ and $\Gamma$ on the disjoint set of vertices is 
$$\Delta \ast \Gamma := \{ \sigma \cup \tau ~|~ \sigma \in \Delta, \tau \in \Gamma\}.$$
Note that the set of minimal non-faces of $\Delta \ast \Gamma$ is the union of the sets of minimal non-faces of $\Delta$ and $\Gamma.$ For a simplicial complex $\Delta$, a \emph{cone} of $\Delta$ is a join of $\Delta$ with a single vertex; a \emph{suspension} of $
\Delta$ is denoted by $\susp (\Delta)$ and defined as $\susp (\Delta) = \partial \Delta_1 \ast \Delta  $ where  $\Delta_1$  denotes a full 1-simplex.

Let $\Delta$  be simplicial complex and $v$ be a vertex in $\Delta.$  The \emph{star}  of $v$ in $\Delta$ and the \emph{link} of $v$ in $\Delta$ both describe the local structure of $\Delta$ around $v.$  
$$\st_{\Delta} (v) := \{ G\in \Delta ~|~ G \cup \{v\} \in \Delta  \}$$
$$\link_{\Delta} (v) := \{ G \in \st_{\Delta} (v)  ~|~ G \cup \{v\} = \emptyset  \}$$

Note that $\st_{\Delta} (v) = \{ v\} \ast \link_{\Delta}(v).$ The \emph{deletion} of $F \subseteq V(\Delta)$ from $\Delta$ is defined as
$$\Delta \setminus F = \{ G \in \Delta ~|~ F  \nsubseteq G\}.$$

\subsection{Commutative algebra}

Let $k$ be a field, let $R = k[x_1, \ldots, x_n]$ be a standard graded polynomial ring over $n$ variables.

\begin{definition}
Let $M$ be a finitely generated graded $R$-module  and let 

 $$
 0\longrightarrow \bigoplus_{j \in \ZZ} R(-j)^{\beta_{p,j}(M)} \longrightarrow \bigoplus_{j \in \ZZ} R(-j)^{\beta_{p-1,j}(M)} \cdots \longrightarrow 
\bigoplus_{j \in \ZZ} R(-j)^{\beta_{0,j}(M)} \longrightarrow M \longrightarrow 0$$ 

be its minimal free resolution. The symbol $\beta_{i,j} (M)$ denotes the number of copies of $R(-j)$ appearing in the $i$th step of the minimal free resolution of $M$ and we call $\beta_{i,j} (M)$  the \emph{graded Betti number} of $M.$ Graded Betti numbers of $M$ are also given by $\beta_{i,j} (M) = \dim_k \Tor_i (M,k)_j.$  There are several important invariants associated to Betti numbers of $M$  such as the regularity, the projective dimension and the depth. The \emph{Castelnuovo-Mumford regularity} (or, simply \emph{regularity}) of $M$ can be defined by
$$\reg (M) = \max \{ j-i : \beta_{i,j} (M) \neq 0\},$$
the \emph {projective dimension} of $M$ is defined by 
$$\pd (M) = \max \{ i : \beta_{i,j} (M) \neq 0\},$$
and the \emph{depth} of $M$ can be written as
$$\depth (M) = n - \pd (M)$$
by the Auslander-Buchsbaum formula.
\end{definition}

%

Below, we recall the definition of polarization of a monomial ideal (for more details, see \cite{peeva2010}).


\begin{definition}{ \cite[Construction 21.7]{peeva2010}}
Let $R=k[x_1,\ldots, x_n]$ be a polynomial ring over a field $k.$ Given a $n$-tuple $\a=(a_1, \ldots, a_n) \in \ZZ_{\ge 0}^n,$ let $\bf{x^a}$ denote the monomial $x_1^{a_1}\cdots x_n^{a_n} \in R.$ 
\begin{enumerate}
\item The polarization of $\bf{x^a}$ is defined to be $(x^{\a})^{\pol} ,$ where $(\bullet)^{\pol}$ replaces $x_i^{a_i}$ by a product of distinct variables $\prod_{j=1}^{a_i} x_{i,j}.$ \\
\item Let $I=(\bf{x^{a_1}}, \ldots, \bf{x^{a_r}}) \subseteq$$R$ be a monomial ideal. The \emph{polarization} of $I$ is defined to be the ideal $I^{\pol} = ((\bf{x^{a_1}})^{\pol}, \ldots, (\bf{x^{a_r}})^{\pol})$ in a new polynomial ring $R^{\pol} = k [x_{i,j} ~|~ 1 \le i \le n, 1 \le j \le p_i],$ where $p_i$ is the maximum power of $x_i$ appearing in $ \bf{x^{a_1}}, \ldots, \bf{x^{a_r}}.$
\end{enumerate}
\end{definition}

\section{Mixed Wreath Products of Simplicial Complexes}\label{mixedwreath}

\subsection{One-point suspensions and reduced joins} The building blocks of the mixed wreath product construction are the one-point suspension and reduced join operations. In this subsection, we consider these two operations along with their properties.

\begin{definition}
Let $\Delta$ be a simplicial complex and $v$ be a vertex in $\Delta.$ The \emph{one-point suspension}  of $\Delta$  with respect to $v$,  denoted by $\Delta(v),$ is the simplicial complex 
\begin{equation}\label{eq:wedge}
\Delta(v) := \Big( \{ \{v_1\}, \{v_2\} \} \ast (\Delta \setminus \{v\})  \Big) \cup  \Big( \{v_1,v_2\} \ast \link_{\Delta} (v)   \Big)
\end{equation}
where $v_1$ and $v_2$ are two copies of the vertex $v$ that are not contained in $\Delta.$ 
\end{definition}

This operation was first introduced in  \cite{provan1980decompositions} by Provan and Billera. Authors of  \cite{provan1980decompositions} called $\Delta(v)$ the simplicial wedge of $\Delta$ on $v$ and named the operation ``simplicial wedge." Throughout the paper, we use one-point suspension to refer to this operation.

\begin{figure}[!htb]
    \centering
    \begin{minipage}{.5\textwidth}
        \centering
\includegraphics[width=0.5\textwidth]{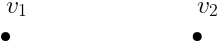}
\caption{Simplicial complex $\Delta$}
  \label{fig:simp}
    \end{minipage}%
    \begin{minipage}{0.5\textwidth}
        \centering
\includegraphics[width=0.5\textwidth]{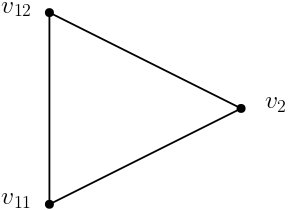}
\caption{$\Delta (v_1)$}
  \label{fig:one}
    \end{minipage}
\end{figure}

\begin{example}
Let $\Delta$ be the disjoint union of two points given in \Cref{fig:simp} with facets $\{v_1\}, \{v_2\}.$ One-point suspension $\Delta (v_1)$ has three vertices $\{ v_{11}, v_{12},v_2\}$. Since $\link_{\Delta} (v_1) = \emptyset,$ and $\Delta \setminus \{v_1\} = \{v_2\}, $  by (\ref{eq:wedge}),
$$\Delta (v_1) =  \Big( \{ \{v_{11} \}, \{ v_{12} \} \} \ast \{v_2\} \Big) \cup  \Big( \{ v_{11},v_{12} \} \ast \emptyset  \Big)  =  \{ v_{11}, v_{12}\} \cup \{ v_{11},v_2\} \cup \{v_{12},v_2 \}$$ 
which is the boundary of a two-simplex as given in \Cref{fig:one}. 
\end{example}

\begin{remark}\label{rem:facet}
The facets of $\Delta(v)$ are closely related  to facets of $\Delta$ and they depend whether a facet of $\Delta$ contains $v$ or not. 
Let $F$ be a facet of $\Delta,$ then
\begin{itemize}
\item  if $F$ does not contain $v,$  we have $F \cup \{v_1\}$ and $F \cup \{v_2\}$  as facets of $\Delta(v),$ and, 
\item  if $v \in F,$ then $( F \setminus \{v\} ) \cup \{v_1, v_2\}$ is a facet of $\Delta(v).$
  \end{itemize}
  
 Every minimal non-face of $\Delta(v)$ containing $v_1$ also contains $v_2$ and vice versa. Furthermore, a minimal non-face $\{v_{i_1}, \ldots, v_{i_k}\}$ of $\Delta$ with $v_{i_j} \neq v$ remains a minimal non-face of $\Delta(v).$ One can reverse the one-point suspension operation to obtain $\Delta$ from $\Delta(v),$ more specifically, simplicial complex $\Delta$ is the link of $v_2$  in $\Delta(v)$ by setting $v_1=v.$
\end{remark}

\begin{remark}\label{rem:suspension}
There is an equivalent definition for  one-point suspension and structure of this definition exploits the combinatorial relation between $\Delta$ and $\Delta(v).$  In particular, we observe that standard suspension and one-point suspension of a simplicial complex $\Delta$ are combinatorially related. 

Let $\{v_1,v_2\}$ be the copies of $v$ in $\Delta(v)$ and let $\Delta_1$ be the full simplex on vertices $\{v_1,v_2\}$ with $\partial \Delta_1= \{\{v_1\}, \{v_2\}\}.$  
\begin{equation}\label{eq:one-point}
\Delta(v) =  \Big(   ( \partial  \Delta_1   \ast \Delta  )  \setminus  (\partial \Delta_1  \ast \st_{\Delta} (v) )  \Big) \cup  \Big(  \Delta_1 \ast \link_{\Delta} (v)\Big)
\end{equation}
Note that $\partial  \Delta_1  \ast \Delta $ is the standard suspension of $\Delta.$ It follows from \Cref{eq:one-point} that $ \Delta(v)$ is obtained from $\partial  \Delta_1  \ast \Delta $ by a bistellar flip which removes $\partial  \Delta_1  \ast \st_{\Delta} (v)  $ from $\partial \Delta_1  \ast \Delta  $ and inserts  $ \Delta_1  \ast \link_{\Delta} (v)$ instead. (see \cite{izmestiev2017simplicial} for a definition and further references on bistellar flips). Then  $\susp (\Delta)$ and $\Delta(v)$ are PL-homeomorphic by \cite{pachner1991pl} (see \cite{hudson1967piecewise} for the definition of a PL-homeomorphism and more details in PL-topology).
\end{remark}

One-point suspension is an operation that can be applied iteratively to a simplicial complex and its one-point suspensions. In particular, one point-suspension is a commutative operation when it is applied with respect to two different vertices of $\Delta.$

\begin{proposition}\label{prop:comm}
Let $\Delta$ be a simplicial complex with at least two (distinct) vertices $v_1$ and $v_2.$ The one-point suspension is a commutative operation, i.e., 
$$(\Delta(v_1))(v_2)=(\Delta(v_2))(v_1).$$
\end{proposition}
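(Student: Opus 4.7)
My plan is to prove commutativity by giving a single symmetric face-by-face characterization of the iterated one-point suspension and observing that it does not depend on the order in which the two suspensions are applied.

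First I would repackage the definition in Equation (3.1) into a simple membership test. Let $A_v = \{v^{(1)}, v^{(2)}\}$ denote the two copies of $v$ in $\Delta(v)$. For any $\sigma \subseteq A_v$ and any $\tau \subseteq V(\Delta) \setminus \{v\}$, the definition immediately gives
\[
\sigma \cup \tau \in \Delta(v) \iff
\begin{cases}
\tau \in \Delta, & \text{if } |\sigma| \leq 1, \\
\tau \cup \{v\} \in \Delta, & \text{if } \sigma = A_v.
\end{cases}
\]
Equivalently: $\sigma \cup \tau \in \Delta(v)$ iff $\tau \cup \varepsilon_v \in \Delta$, where $\varepsilon_v := \{v\}$ if $\sigma = A_v$ and $\varepsilon_v := \emptyset$ otherwise. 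This is the ``one-step'' description I will iterate.

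Next I would apply this description twice. Writing $A_i = \{v_i^{(1)}, v_i^{(2)}\}$ for $i = 1, 2$, every face of $(\Delta(v_1))(v_2)$ has the form $\sigma_1 \cup \sigma_2 \cup \tau$ with $\sigma_i \subseteq A_i$ and $\tau \subseteq V(\Delta) \setminus \{v_1, v_2\}$. Applying the membership test at $v_2$ first, $\sigma_1 \cup \sigma_2 \cup \tau \in (\Delta(v_1))(v_2)$ iff $\sigma_1 \cup \tau \cup \varepsilon_{v_2} \in \Delta(v_1)$, where $\varepsilon_{v_2}$ is $\{v_2\}$ or $\emptyset$ according to whether $\sigma_2 = A_2$ or not. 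Applying the membership test again at $v_1$ translates this into the condition $\tau \cup \varepsilon_{v_1} \cup \varepsilon_{v_2} \in \Delta$, where $\varepsilon_{v_i} = \{v_i\}$ if $\sigma_i = A_i$ and $\varepsilon_{v_i} = \emptyset$ otherwise.

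Now the punchline: the resulting condition
\[
\sigma_1 \cup \sigma_2 \cup \tau \in (\Delta(v_1))(v_2) \iff \tau \cup \varepsilon_{v_1} \cup \varepsilon_{v_2} \in \Delta
\]
is manifestly symmetric in the indices $1$ and $2$. Running exactly the same argument in the opposite order yields the identical characterization for $(\Delta(v_2))(v_1)$, so both simplicial complexes have the same underlying vertex set $A_1 \cup A_2 \cup (V(\Delta)\setminus\{v_1,v_2\})$ and precisely the same faces; therefore they coincide. The only potentially delicate point is handling the edge cases where $v_1$ or $v_2$ is already absent from a face, or where $\link_\Delta(v_i)$ is empty, but each of these is covered uniformly by the compact membership criterion above, so I do not expect any genuine obstacle---the proof is essentially a careful unwinding of the definition.
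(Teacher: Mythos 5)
Your proof is correct. The paper establishes the same identity by working at the level of facets: using the description of the facets of a one-point suspension (\Cref{rem:facet}), it enumerates the facets of $(\Delta(v_1))(v_2)$ in four cases according to whether a facet $F$ of $\Delta$ contains $v_1$ and/or $v_2$, and then observes that the resulting list is invariant under exchanging the roles of $v_1$ and $v_2$. You instead argue face by face: you compress the definition of $\Delta(v)$ into the single membership rule $\sigma\cup\tau\in\Delta(v)\iff\tau\cup\varepsilon_v\in\Delta$ and compose it with itself, obtaining a characterization of all faces of the double suspension that is manifestly symmetric in $v_1$ and $v_2$. The two arguments carry essentially the same content, but yours buys a uniform, case-free verification (the paper's four facet cases collapse into one formula) and avoids having to check that the sets produced by the facet recipe are actually maximal, while the paper's facet-level bookkeeping has the advantage of feeding directly into the facet descriptions reused later for reduced joins and mixed wreath products. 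The one point worth making fully explicit in your write-up is the identification of vertex sets: both complexes must be read on the same set $A_1\cup A_2\cup\bigl(V(\Delta)\setminus\{v_1,v_2\}\bigr)$, with the two copies of $v_i$ produced in either order of suspension identified with each other, before ``same faces'' yields equality of complexes; you gesture at this and it is harmless, but it deserves a sentence.
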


\begin{proof}
Let $\{v_1,v_2, \ldots, v_n\}$ be the set of vertices of $\Delta$ where $ V ( \Delta(v_1)) = \{v_{11},v_{12}, v_2, \ldots, v_n\}$ and $ V((\Delta(v_1)) (v_2) )= \{v_{11},v_{12}, v_{21},v_{22}, v_3, \ldots, v_n\}.$ It follows from  \Cref{rem:facet} that, for all facets $F$ of $\Delta,$ facets of $\Delta(v_1)$ are of either $ (F \setminus \{v_1\}) \cup \{v_{11}, v_{12}\} $ if $v_1 \in F,$ or $ F \cup \{v_{11} \} $ and $F\cup \{v_{12}\}$ if $ v_1 \notin F.$

Similarly, facets of $(\Delta(v_1))(v_2)$ can be determined transitively via facets $F$ of $\Delta.$ In particular, all facets of $(\Delta(v_1))(v_2)$  are in one of following forms.
\begin{itemize}
\item $(F \setminus \{v_1,v_2\} )\cup \{v_{11}, v_{12}\} \cup \{v_{21}, v_{22}\} $  if $v_1,v_2 \in F,$ 
\item  $ (F\setminus \{v_2\}  )\cup \{v_{11} \} \cup \{v_{21}, v_{22}\}$ or $(F \setminus \{v_2\}) \cup \{v_{12}\} \cup \{v_{21}, v_{22}\} $ if $ v_1 \notin F$ but $v_2 \in F,$
\item  $ (F \setminus \{v_1\}) \cup \{v_{11}, v_{12}\} \cup \{v_{21}\}$ or $(F \setminus \{v_1\}) \cup \{v_{11}, v_{12}\} \cup \{v_{22}\}$ if $v_1 \in F$ but $v_2 \notin F,$ 
\item $ F \cup \{v_{11} \} \cup \{v_{21} \},  F \cup \{v_{11} \} \cup \{v_{22} \},  F\cup \{v_{12}\} \cup \{ v_{21}\}, $ or $ F\cup \{v_{12}\} \cup \{ v_{22}\}$ if $v_1,v_2 \notin F.$ 
\end{itemize}
The roles of $v_1$ and $v_2$ can be exchanged and the result follows.
\end{proof}
 
Next, we consider the case when the iterative one-point suspension is applied with respect to a vertex and its copies in the one-point suspension.

\begin{remark}\label{rem:wedge} 
Let $\Delta$ be a simplicial complex with vertex $v$ and let $\{v_1,v_2\}$ be copies of $v$ in $\Delta(v).$ One-point suspension of $\Delta (v)$ with respect to $v_1$ or $v_2$ produces the same simplicial complex. 

Consider the simplicial complexes $(\Delta(v))(v_1)$  and  $(\Delta(v))(v_2)$  which are one-point suspensions of  $\Delta(v)$ with respect to $v_1$ and $v_2,$ respectively. Let 
 $\{v_{1,1}, v_{1,2}\}$ be copies of $v_1$  in $(\Delta(v))(v_1)$ and  $\{v_{2,1}, v_{2,2}\}$ be copies of $v_2$  in $(\Delta(v))(v_2).$ It is immediate that 
$$(\Delta(v))(v_1)=(\Delta(v))(v_2)$$
by identifying $v_{1,i} = v_{2,i}$ for $i=1,2.$  

In addition, by renaming $v$ and its copies  in $\Delta(v) (v_1)$  as $v_{11}, v_{12},v_{13},$ we have
\begin{eqnarray*}
 (\Delta(v))(v_1)  &=& \Big( \{ \{v_{11}, v_{12} \}, \{v_{11}, v_{13} \}, \{ v_{12}, v_{13}\}  \} \ast  (\Delta \setminus \{v\} )  \Big) \cup \Big( \{ v_{11},v_{12},v_{13} \} \ast \link_{\Delta} (v) \Big) \\
 &=& \Big(  ( \partial \Delta_{2}  \ast \Delta ) \setminus ( \partial  \Delta_{2} \ast \st_{\Delta} (v) ) \Big)  \cup \Big( \Delta_{2}  \ast \link_{\Delta} (v) \Big).
 \end{eqnarray*}
\end{remark}

Therefore, by comparing the above equality with \Cref{eq:one-point}, one can see that a higher dimensional analog of one-point suspension makes sense and it can be considered as  iterated one-point suspensions of $\Delta$ with respect to $v$ and copies of $v$ that are generated in each intermediate step (regardless of which copy is chosen). In  \cite{joswig2005one}, authors introduced the \emph{reduced join} operation to address such iterated one-point suspensions and we recall it below.

\begin{definition}{\cite[Definition 3.4]{joswig2005one}}
Let $\Delta$ be a simplicial complex and $v$ be a vertex of $\Delta.$ The \emph{reduced join} of $\Delta$ with $\partial \Delta_{d} ,$ the boundary  of a full $d$-simplex $\Delta_{d},$ with respect to $v$ is the simplicial complex 
$$ \partial \Delta_{d} \ast_v \Delta = \Big(  ( \partial \Delta_{d}  \ast \Delta ) \setminus ( \partial  \Delta_{d} \ast \st_{\Delta} (v)) \Big)  \cup \Big(  \Delta_{d}  \ast \link_{\Delta} (v)  \Big). $$
Note that this operation is equivalent to $d$-iterated one-point suspension of $\Delta$ with respect to $v$ and its copies from intermediate steps.
It follows from \Cref{rem:facet} that facets of $ \partial \Delta_{d} \ast_v \Delta $ are of the following form
$$ (F \setminus \{v\}) \cup \bigcup_{v \in F} \{ v_0, v_1, \ldots, v_{d} \} \cup \bigcup_{v\notin F } \{  v_0, v_1 \ldots, \widehat{v_i}, \ldots, v_{d}\}$$
for each facet $F$ of $\Delta.$ Practically, reduced join operation  (or $d$- iterated application of one-point suspension to $v$) replaces vertex $v$ with a full $d$-simplex and connects its boundary  to $\Delta \setminus \{v\}.$
\end{definition}

Similar to the discussion in \Cref{rem:suspension}, the reduced join $ \partial \Delta_{d} \ast_v \Delta $ is obtained from join $ \partial \Delta_{d}  \ast \Delta$ by a generalized bistellar flip and both simplicial complexes are PL-homeomorphic.

As an immediate consequence of \Cref{prop:comm}, the reduced join is a commutative operation because it is an iterated application of one-point suspension with respect to a vertex and its copies from each iteration.
\begin{corollary}{\cite[Proposition 3.8]{joswig2005one}}
Let $\Delta$ be a simplicial complex with at least two (distinct) vertices $v_1$ and $v_2.$ The reduced join operation is a commutative operation, i.e., 
$$  \partial \Delta_{d_1} \ast_{v_1} (  \partial \Delta_{d_2} \ast_{v_2}  \Delta  )  =   \partial \Delta_{d_2} \ast_{v_2} (  \partial \Delta_{d_1} \ast_{v_1}  \Delta  )  $$
for $d_1, d_2 \geq 1.$ 
\end{corollary}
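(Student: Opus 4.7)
The plan is to unfold both sides of the asserted equality into sequences of ordinary one-point suspensions and then exchange operations pairwise using \Cref{prop:comm}. By the definition just given, the reduced join $\partial \Delta_{d} \ast_v (-)$ is the $d$-fold iterated one-point suspension with respect to $v$ and the successively produced copies; moreover, by \Cref{rem:wedge}, the outcome at each intermediate stage is independent of which copy of $v$ is chosen for the next suspension. Hence both sides can be expressed as a composition of $d_1+d_2$ one-point suspensions applied to $\Delta$: the left-hand side performs $d_2$ suspensions on the $v_2$-family first, followed by $d_1$ suspensions on the $v_1$-family, whereas the right-hand side interleaves them in the opposite order.

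First I would record the disjointness observation that drives the proof. Since $v_1\neq v_2$ and every copy of $v_i$ arising during the iterative construction is a fresh vertex named from the $v_i$-family, any copy of $v_1$ ever produced is distinct from any copy of $v_2$. Therefore every step of the form $(-)(u)$ coming from the $v_1$-family is applied to a vertex different from every vertex used by any $v_2$-family step. By \Cref{prop:comm}, two adjacent one-point suspensions from different families commute.

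Next, I would use a bubble-sort interchange argument: starting from the length-$(d_1+d_2)$ sequence of suspensions that realizes the left-hand side, repeatedly swap an adjacent $v_2$-step past a $v_1$-step. Each swap is justified by \Cref{prop:comm} applied to the two distinct vertices involved, and after each swap \Cref{rem:wedge} lets us relabel the newly introduced pair of copies so that it matches the naming used in the iteration producing the right-hand side. After finitely many swaps the $v_1$-steps appear before all $v_2$-steps, which is exactly the sequence realizing $\partial \Delta_{d_2} \ast_{v_2}(\partial \Delta_{d_1} \ast_{v_1} \Delta)$.

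The only subtle point, and what I expect to be the main obstacle, is checking that after interchanging two operations the intermediate complex still matches the definition of the reduced join being built up on the other side, i.e.\ that the choice of copy used for the next suspension does not affect the output. This is precisely the content of \Cref{rem:wedge}, so the obstacle is already neutralized; the rest of the argument reduces to transporting the commutativity of a single swap (\Cref{prop:comm}) through the inductive construction, yielding the stated equality.
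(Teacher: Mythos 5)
Your proposal is correct and follows essentially the same route as the paper, which presents the corollary as an immediate consequence of \Cref{prop:comm} precisely because the reduced join is an iterated one-point suspension on a vertex and its copies. Your bubble-sort interchange and the appeal to \Cref{rem:wedge} for independence of the chosen copy simply make explicit the details the paper leaves implicit.
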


\subsection{Mixed wreath products of simplicial complexes}

The mixed wreath product of a simplicial complex generates a highly symmetric simplicial complex from a given one. Resulting simplicial complex preserves various interesting combinatorial and topological properties of the original simplicial complex. In this subsection, we investigate several of these properties such as vertex decomposability, shellability, constructibility, and Cohen-Macaulayness by following the work of Joswig and Lutz \cite{joswig2005one}.  In the literature, mixed wreath products were also studied in \cite{bahri2015operations} from a toric topology point of view.

\begin{definition}\label{def:mixedwreath}
Let $\Delta$ be a simplicial complex with $n$ vertices, say $V(\Delta) = \{v_1,\ldots, v_n\},$ and let $(d_1, d_2, \ldots, d_n)$ be a sequence of positive integers. Let $\partial \Delta_{d_i}$ denote the boundary of a full $d_i$-simplex $\Delta_{d_i}$ for each $i.$  The \emph{mixed wreath product} $\partial \Delta_{(d_1, \ldots, d_n)} \wr \Delta$ of $\Delta$ is defined as follows.   

The vertices of $\partial \Delta_{(d_1, \ldots, d_n)} \wr \Delta$ are obtained by taking $(d_i+1)$ copies, say $v_{i,0}, v_{i,1}, \ldots, v_{i,d_i},$ of vertex $v_i$  for  each $1\leq i \leq n.$  
$$V (\partial \Delta_{(d_1, \ldots, d_n)} \wr \Delta) = \bigcup_{i=1}^n \{ v_{i,0}, \ldots, v_{i,d_i} \}$$
The facets of $ \partial \Delta_{(d_1, \ldots, d_n)} \wr \Delta$ arise from facets of $\Delta.$ In particular, facets of $\partial \Delta_{(d_1, \ldots, d_n)} \wr \Delta$ are all those subsets of $V(\partial \Delta_{(d_1, \ldots, d_n)} \wr \Delta) $ of the form
$$\displaystyle  \mathcal{F}= \bigcup_{v_i \in F} \{ v_{i,0},  \ldots, v_{i,d_i} \} \cup \bigcup_{v_j\notin F} \{ v_{j,0}, \ldots, \widehat{v_{j,k}}, \ldots, v_{j,d_j} \},$$
where $F$ is a facet of $\Delta.$ In the above expression $\{ v_{j,0},  \ldots, \widehat{v_{j,k}}, \ldots, v_{j,d_j} \}$, one of the vertices $\{ v_{j,0}, \ldots, v_{j,d_j} \}$ is omitted for the vertices $ v_j \notin F. $  Note that $  \dim (\mathcal{F})= \sum_{i=1}^n d_i+ \dim (F).$ 
\end{definition}

\begin{remark}
The dimension of the wreath product is $\displaystyle \sum_{i=1}^n d_i+ \dim (\Delta).$  Thus, the wreath product $\partial \Delta_{(d_1, \ldots, d_n)} \wr \Delta$ is pure if and only if $\Delta$ is pure.
\end{remark}

\begin{example}
\begin{figure}[h]
    \centering
    \begin{minipage}{.5\textwidth}
        \centering
\includegraphics[width=0.8\textwidth]{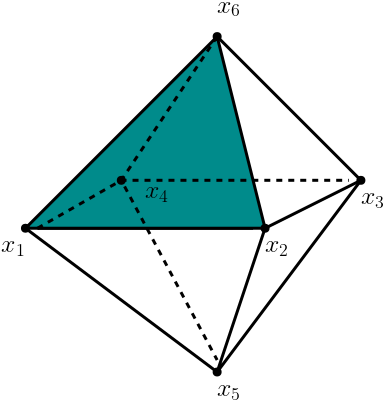}
\caption{ one facet of $\Delta$}
  \label{fig:empty}
    \end{minipage}%
    \begin{minipage}{0.5\textwidth}
        \centering
\includegraphics[width=0.8\textwidth]{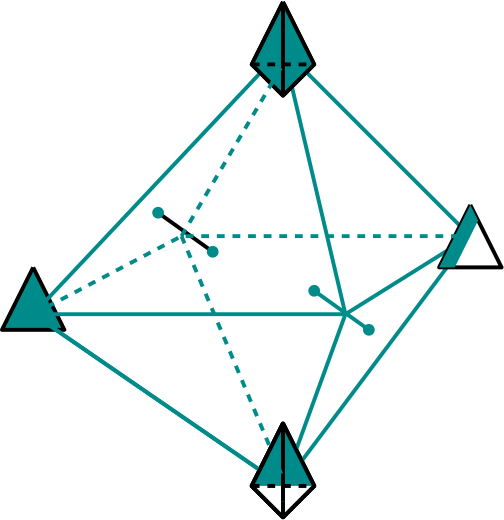}
\caption{one facet of $\partial \Delta_{(2,1,2,1,3,3)} \wr \Delta$}
  \label{fig:mixedwreath}
    \end{minipage}
\end{figure}
In Figure \ref{fig:mixedwreath}, we display one facet (in green) of the $14$-dimensional simplicial complex 
$\partial \Delta_{(2,1,2,1,3,3)} \wr \Delta$ where $\Delta$ is the octahedron given in Figure \ref{fig:empty}. This facet arises from the upper front triangle  $\{x_1,x_2,x_6\}$ of the octahedron.  Each vertex in the upper triangle of $\Delta$ contributes a full simplex $\Delta_2, \Delta_1,$ and $\Delta_3,$ in order, to a facet of the mixed wreath product and  each of the remaining vertices $x_3, x_4, x_5$ contribute  boundaries $\partial \Delta_2, \partial \Delta_1,$ and $\partial \Delta_3,$ in order. 
\end{example}

Note that the mixed wreath product is obtained from  $\Delta$ by successive reduced joins (in arbitrary order) with $\partial \Delta_{d_i}$ with respect to $v_i$ for all the vertices of $\Delta.$ Practically, one can obtain the mixed wreath product of $\Delta$ by replacing each vertex $v_i$ with a full  $d_i$-simplex and joining the boundary of the full simplex to the rest of the ``new" simplicial complex.

\begin{remark}
The \emph{wreath product}, defined in \cite[Definition 3.5]{joswig2005one} and denoted by $\partial \Delta_{d} \wr \Delta$, is a special case of the mixed wreath product. More specifically, $\partial \Delta_{(d_1, \ldots, d_n)} \wr \Delta = \partial \Delta_{d} \wr \Delta$ when $d_i =d$ for all $1\leq i \leq n.$ 
\end{remark}

\begin{remark}
If each $d_i=0$ for $1 \leq i \leq n, $ then $\partial \Delta_{(d_1, \ldots, d_n)} \wr \Delta  = \Delta$ and  $\partial \Delta_{(d_1, \ldots, d_n)} \wr \Delta = \Delta_{d_1}$  if $\Delta$ is a point. Also, if $\Delta$ is a full simplex on $n$ vertices, then  $\partial \Delta_{(d_1, \ldots, d_n)} \wr \Delta $ is a full simplex on $d_1 + \cdots +d_n+n$ vertices.
\end{remark}

Recall that the \emph{f-vector} of a $d$-dimensional simplicial complex $\Delta$ is the sequence  
$$f(\Delta) = (f_0, f_1, \ldots, f_d)$$
where $f_i$ is the number of $i$-dimensional faces of $\Delta$ for $0 \leq i \leq d.$

\begin{proposition}\label{prop:fvector}
Given a $d$-dimensional simplicial complex $\Delta$ on $n$ vertices, the $f$-vector of the mixed wreath product $\partial \Delta_{(d_1,\ldots,d_n)} \wr \Delta$ can be expressed in terms of $f$-vectors of $\Delta.$  In particular, we have
$$f_0 (\partial \Delta_{(d_1,\ldots,d_n)} \wr \Delta) = \sum_{i=1}^n d_i +f_0(\Delta), \text{ and } $$
$$\displaystyle f_{\tilde{d}~}  (\partial \Delta_{(d_1,\ldots,d_n)} \wr \Delta)  = \sum_{i=1}^{f_{d} (\Delta)} ~ \prod_{j=1}^{n-d-1} (d_{i,j}+1)$$
where $\tilde{d}:= \dim ( \partial \Delta_{(d_1,\ldots,d_n)} \wr \Delta)=\sum_{i=1}^n d_i+ d$ and complement of $F_i$ is a set $ \{ v_{i,1}, \ldots, v_{i,n-d-1}\}$ for each $d$-dimensional facet $F_i$ for $1\leq i \leq f_{d} (\Delta) .$
\end{proposition}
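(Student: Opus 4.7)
The plan is to read both formulas directly off \Cref{def:mixedwreath}: the vertex count is immediate from the vertex set description, and the count of top-dimensional faces reduces to a product rule over the complement of each top facet of $\Delta$.

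First I would establish the $f_0$ formula. By the definition of the mixed wreath product, the vertex set decomposes as a disjoint union $V(\partial \Delta_{(d_1,\ldots,d_n)} \wr \Delta) = \bigsqcup_{i=1}^n \{v_{i,0}, v_{i,1}, \ldots, v_{i,d_i}\}$, where each $v_i \in V(\Delta)$ contributes exactly $d_i + 1$ vertices. Summing gives $f_0 = \sum_{i=1}^n (d_i+1) = \sum_{i=1}^n d_i + n$, and identifying $n = f_0(\Delta)$ yields the first asserted identity.

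Next I would address the $f_{\tilde{d}}$ formula. The key preliminary observation is that any face of dimension $\tilde{d} = \sum d_i + d$ in the mixed wreath product must itself be a facet, since $\tilde{d}$ equals the dimension of the wreath product. By the definition, each facet of $\partial \Delta_{(d_1,\ldots,d_n)} \wr \Delta$ is built from a single facet $F$ of $\Delta$ and has dimension $\sum_{i=1}^n d_i + \dim(F)$. Thus achieving dimension $\tilde{d}$ forces $\dim(F) = d$, so the $\tilde d$-dimensional faces are in bijection with pairs $(F, \mathcal{F})$ where $F$ ranges over the $d$-dimensional facets of $\Delta$ (there are $f_d(\Delta)$ of them) and $\mathcal{F}$ ranges over the wreath-product facets built from $F$.

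Then I would count, for a fixed $d$-dimensional facet $F_i$ of $\Delta$, the number of facets $\mathcal{F}$ of the form prescribed in \Cref{def:mixedwreath}. For each $v \in F_i$ the block $\{v_0, \ldots, v_{d_v}\}$ is included in full, so there is no choice; for each vertex $v_{i,j}$ in the complement $V(\Delta) \setminus F_i = \{v_{i,1}, \ldots, v_{i,n-d-1}\}$ we must omit exactly one element from the $(d_{i,j}+1)$-element block, contributing a factor of $d_{i,j}+1$. By the multiplication principle this produces $\prod_{j=1}^{n-d-1}(d_{i,j}+1)$ facets from $F_i$. Summing over the $f_d(\Delta)$ top-dimensional facets of $\Delta$ gives exactly the stated expression for $f_{\tilde{d}}(\partial \Delta_{(d_1,\ldots,d_n)} \wr \Delta)$. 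There is no real obstacle here — the only subtlety is justifying that no $\tilde d$-dimensional face arises from a non-maximal facet of $\Delta$, which follows from the dimension identity $\dim(\mathcal{F}) = \sum d_i + \dim F$ observed in \Cref{def:mixedwreath}.
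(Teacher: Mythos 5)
Your proposal is correct and follows essentially the same route as the paper: read off $f_0$ from the block decomposition of the vertex set, and count top-dimensional facets by noting that each $d$-dimensional facet $F_i$ of $\Delta$ contributes one choice of omitted vertex per complementary block, giving $\prod_{j=1}^{n-d-1}(d_{i,j}+1)$ facets. Your extra remark that no $\tilde d$-dimensional face can arise from a lower-dimensional facet of $\Delta$ (via $\dim(\mathcal{F}) = \sum d_i + \dim F$) is a small additional justification the paper leaves implicit, but the argument is otherwise identical.
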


\begin{proof}
It follows from the definition that the vertex set of the mixed wreath product $\partial \Delta_{(d_1,\ldots,d_n)} \wr \Delta$  is formed by taking $d_i+1$ copies of vertex  $v_i$ of $\Delta$ for each $1\leq i \leq n.$ Hence, $f_0 (\partial \Delta_{(d_1,\ldots,d_n)} \wr \Delta) = \sum_{i=1}^n d_i +f_0(\Delta).$

Recall that $\dim (\partial \Delta_{(d_1,\ldots,d_n)} \wr \Delta) =\sum_{i=1}^n d_i+\dim(\Delta)$ where $\dim (\Delta) =d.$  The facets of the mixed wreath product are obtained from facets of $\Delta$ and each facet of the mixed wreath product are of the following form 
$$\bigcup_{v_i\in F} \{v_{i,0}, \ldots, v_{i,d_i}\} \cup \bigcup_{v_j \notin F} \{ v_{j,0}, \ldots, \widehat{v_{j,l}}, \ldots, v_{j,d_j}\}.$$ 
Let $f_{d} (\Delta)=k$  and $F_1, \ldots, F_k$ be the $d$-dimensional facets of $\Delta.$  For $v_j \in F_i$, we have the full $d_j$-simplex  $\Delta_{d_j}$ and for $v_j \notin F_i,$ we have each facet of boundary of a full $d_j$-simplex $\Delta_{d_j}$ contributing to a facet of the mixed wreath product. Let $F_i^c$ be the complement of $F_i$ where $F_i^c =  \{ v_{i,1}, \ldots, v_{i,n-d-1}\}.$ Then, for each $F_i,$ there are $\prod_{j=1}^{n-d-1} (d_{i,j}+1) $ facets of the mixed wreath product since $\partial \Delta_{d_j}$ has $d_j+1$ facets. Therefore, we have in total $\sum_{i=1}^k \prod_{j=1}^{n-d-1} (d_{i,j}+1) $ many facets of top dimension in the mixed wreath product.

One can also obtain a formula for each $f_i (\partial \Delta_{(d_1,\ldots,d_n)} \wr \Delta)$ when $1 < i <  \dim (\partial \Delta_{(d_1,\ldots,d_n)} \wr \Delta)$ in terms of the $f$-vectors of $\Delta.$ However, computations for the intermediate  $f$-vectors is more complex and tedious, therefore, we do not provide the explicit descriptions of the intermediate $f$-vectors for the sake of abbreviation.  
\end{proof}

In the remainder of this subsection, we focus on combinatorial properties of mixed wreath products such as vertex-decomposability, shellability, constructibility.


\begin{definition} Let $\Delta$ be a $d$-dimensional simplicial complex.
\begin{itemize}
\item A simplicial complex $\Delta$ is recursively defined to be \emph{vertex decomposable} if it is either a simplex or else has some vertex $v$ so that
\begin{enumerate}
\item both $\Delta \setminus \{v\}$ and $\link_{\Delta} (v)$ are vertex decomposable, and
\item no face of $\link_{\Delta}(v)$ is a facet of $\Delta \setminus \{v\}.$
\end{enumerate}
\end{itemize}
Vertex decompositions were introduced in the pure case by Provan and Billera in \cite{provan1980decompositions} and  extended to non-pure complexes by Bj\"orner and Wachs \cite[Section 11]{bjorner1997shellable} where they also defined shellable simplicial complexes in the non-pure set up.
 \begin{itemize}
\item Shellability is a way of putting together a simplicial complex while its faces fit together nicely. More precisely, a simplicial complex $\Delta$ is \emph{shellable} if its facets can be ordered $F_1, \ldots, F_t$
such that for all $1 \leq i<j \leq t,$ there exists some $v \in F_j \setminus F_i$ and some $l \in \{ 1, \ldots, j-1\}$ with $F_j \setminus F_l =\{v\}.$
\end{itemize}

 \begin{itemize}
\item $\Delta$ is \emph{constructible} if either $\Delta$ is a simplex or there are two $d$-dimensional subcomplexes
 $\Delta_1$ and $\Delta_2$ of $\Delta$ such that their union is $\Delta$ and $\Delta_1 \cap \Delta_2$ is  constructible $(d-1)$-dimensional simplicial complex. 
\end{itemize}
\begin{itemize}
\item (Reisner's Criterion) $\Delta$ is \emph{Cohen-Macaulay} if $\widetilde{H}_i (\link_{\Delta} (F);k) =0$ for all faces $F$ and $i < \dim (\link_{\Delta} (F))$ where $k$ is any field.  It is a well-known fact that any Cohen-Macaulay complex is pure.  
\end{itemize}
%
For pure simplicial complexes the following implications are strict (see \cite{bjorner1996topological}). 
$$\text{vertex decomposable } \implies \text{ shellable }  \implies \text{ constructible }  \implies \text{Cohen-Macaulay }$$

\end{definition}

If one drops the pureness condition and replaces Cohen-Macaulay with sequentially Cohen-Macaulay, above implications still hold  (see \cite{bjorner1996topological}). 

In \cite{joswig2005one}, it was shown that vertex-decomposability, shellability, and constructibility properties are preserved under the wreath product construction. Below, we generalize their results  \cite[Corollaries 4.2,  4.5, and 4.7]{joswig2005one} to mixed wreath products.

\begin{corollary}\label{cor:properties}
Let $\mathcal{P}$ be one of the following properties of a simplicial complex. 
\begin{enumerate}
\item vertex-decomposable,
\item shellable, or
\item constructible.
\end{enumerate}
The mixed wreath product of $\Delta$ is $\mathcal{P}$ if and only $\Delta$  is $\mathcal{P}.$ 
\end{corollary}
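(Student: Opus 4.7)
The plan is to reduce the statement to a single one-point suspension and then invoke (or mirror) the one-vertex preservation arguments that underlie the Joswig--Lutz results. The key structural observation is that the mixed wreath product factors as iterated one-point suspensions: the reduced join $\partial \Delta_{d_i} \ast_{v_i} (-)$ is by definition the $d_i$-fold one-point suspension at $v_i$ and its successive copies (as noted in \Cref{rem:wedge}), and by \Cref{prop:comm} and its Corollary one-point suspensions applied to distinct vertices commute. Therefore I would realize $\partial \Delta_{(d_1,\ldots,d_n)} \wr \Delta$ as a sequence of exactly $d_1 + \cdots + d_n$ single-vertex one-point suspensions, each applied to a vertex of the complex obtained at the previous stage. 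Induction on $d_1 + \cdots + d_n$ then reduces the statement to the following base case: for any simplicial complex $\Gamma$ and any vertex $w \in V(\Gamma)$, the one-point suspension $\Gamma(w)$ has $\mathcal{P}$ if and only if $\Gamma$ does.

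The base case for each of the three properties would be handled as follows. For the forward direction (from $\Gamma(w)$ back to $\Gamma$), I use \Cref{rem:facet}, which states that $\Gamma$ is recovered as the link of $v_2$ in $\Gamma(w)$ under the identification $v_1 = w$; since links (and deletions) of vertex-decomposable, shellable, and constructible complexes inherit the respective property, this gives the implication. For the reverse direction (from $\Gamma$ to $\Gamma(w)$), I build the required structure on $\Gamma(w)$ from the one on $\Gamma$:

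\begin{itemize}
\item[(i)] If $\Gamma$ is vertex-decomposable, then $v_2$ is a shedding vertex of $\Gamma(w)$, because $\link_{\Gamma(w)}(v_2)$ is isomorphic to $\Gamma$ and $\Gamma(w) \setminus \{v_2\}$ is a cone over $\Gamma \setminus \{w\}$ with apex $v_1$; the no-face-of-the-link-is-a-facet-of-the-deletion condition is then verified by checking that every facet of $\Gamma(w) \setminus \{v_2\}$ contains $v_1$ while no face of $\link_{\Gamma(w)}(v_2)$ does.

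\item[(ii)] If $F_1, \ldots, F_t$ is a shelling of $\Gamma$, an explicit shelling of $\Gamma(w)$ is produced facet by facet by applying the recipe of \Cref{rem:facet}: each $F_i$ with $w \in F_i$ is replaced by $(F_i \setminus \{w\}) \cup \{v_1, v_2\}$, and each $F_i$ with $w \notin F_i$ is replaced by the two facets $F_i \cup \{v_1\}$ and $F_i \cup \{v_2\}$ (inserted consecutively), and one checks that the shelling condition is preserved.

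\item[(iii)] If $\Gamma = \Gamma_1 \cup \Gamma_2$ is a constructible decomposition, then $\Gamma(w) = \Gamma_1(w) \cup \Gamma_2(w)$ with $\Gamma_1(w) \cap \Gamma_2(w) = (\Gamma_1 \cap \Gamma_2)(w)$, and one-point suspension lowers dimension as required, so constructibility follows by induction on dimension.
\end{itemize}

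The main obstacle I expect is a careful bookkeeping issue rather than a conceptual one: for vertex-decomposability the shedding condition in the non-pure Bj\"orner--Wachs sense must be verified on both directions, and for shellability the correct interleaving of the two replacement facets $F_i \cup \{v_1\}$ and $F_i \cup \{v_2\}$ has to be chosen so that the defining condition ``$F_j \setminus F_l = \{v\}$ for some $l<j$'' holds. Once the base case is in hand, the induction collapses the mixed case onto the single-vertex case and the corollary follows; in effect, this mirrors and slightly refines the one-vertex step already implicit in the proofs of \cite[Corollaries 4.2, 4.5, 4.7]{joswig2005one}, with the commutativity lemmas from \Cref{mixedwreath} ensuring that varying the parameters $d_i$ independently causes no extra difficulty.
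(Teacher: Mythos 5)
Your overall strategy is exactly the paper's: factor the mixed wreath product into $d_1+\cdots+d_n$ one-point suspensions (using the commutativity results) and reduce to the statement that a single one-point suspension $\Gamma(w)$ has $\mathcal{P}$ if and only if $\Gamma$ does. The paper stops there and cites that base case directly (\cite[Proposition 2.5]{provan1980decompositions} for vertex-decomposability, \cite[Propositions 4.3 and 4.6]{joswig2005one} for shellability and constructibility); had you done the same, your proof would be complete and essentially identical. The problem is that your attempted re-derivation of the vertex-decomposability base case, item (i), is wrong as stated.

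Two concrete issues. First, the claim that ``no face of $\link_{\Gamma(w)}(v_2)$ contains $v_1$'' is false: one computes $\link_{\Gamma(w)}(v_2)=(\Gamma\setminus\{w\})\cup\bigl(\{v_1\}\ast\link_{\Gamma}(w)\bigr)$, so already $\{v_1\}$ is a face of the link. Worse, $v_2$ need not be a shedding vertex at all. Take $\Gamma$ with facets $\{a,w\}$ and $\{b\}$ (vertex-decomposable in the Bj\"orner--Wachs sense). Then $\Gamma(w)$ has facets $\{a,v_1,v_2\},\{b,v_1\},\{b,v_2\}$; the deletion $\Gamma(w)\setminus\{v_2\}$ has facets $\{a,v_1\}$ and $\{b,v_1\}$, while $\link_{\Gamma(w)}(v_2)$ has facets $\{a,v_1\}$ and $\{b\}$, so the face $\{a,v_1\}$ of the link is a facet of the deletion and the shedding condition fails (here $\Gamma(w)$ is still vertex-decomposable, but via the shedding vertex $b$, not $v_2$). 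Second, even where the shedding condition does hold, your argument needs $\Gamma(w)\setminus\{v_2\}=\{v_1\}\ast(\Gamma\setminus\{w\})$ to be vertex-decomposable, which requires $\Gamma\setminus\{w\}$ to be vertex-decomposable; vertex-decomposability is not inherited by deletion of an arbitrary vertex, and nothing in your hypothesis guarantees it for $w$. (The same caution applies to your parenthetical ``and deletions'' in the forward direction, though there you only actually need links, which do inherit all three properties.) The correct argument, as in Provan--Billera, inducts on a shedding vertex of $\Gamma$ itself, splitting into the cases where that vertex is or is not $w$; only in the case where $w$ itself is the shedding vertex does the pair $(v_2,\ \text{cone over }\Gamma\setminus\{w\})$ behave as you describe. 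Items (ii) and (iii) are essentially the standard arguments and are fine modulo the bookkeeping you acknowledge.
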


\begin{proof}
Recall that the mixed wreath product of a simplicial complex $\Delta$ is obtained by iterative applications of one-point suspensions. Let $\Delta (v)$ be a one-point suspension of $\Delta$ with respect to $v \in \Delta.$ Since $\Delta(v)$ is 
\begin{itemize}
\item vertex-decomposable if and only if  $\Delta$ is vertex decomposable by \cite[Proposition 2.5]{provan1980decompositions},
\item shellable   if and only if   $\Delta$ is shellable by \cite[Proposition 4.3]{joswig2005one}, 
\item constructible  if and only if  $\Delta$ is constructible by \cite[Proposition 4.6]{joswig2005one},
\end{itemize}
the mixed wreath product of $\Delta$ preserves the above properties  if and only if  $\Delta$ satisfies those properties. 
\end{proof}

We wish to point out that this list of properties can be extended and it is far from being an exhaustive list of properties which are preserved under the mixed wreath product construction.

\begin{remark}\label{rem:CM}
It is due to Munkres \cite{munkres1984topological} that Cohen-Macalayness  over a field is a topological property and the same work implies that one-point suspension of a Cohen-Macaulay simplicial complex is again a Cohen-Macaulay (also discussed in \cite{joswig2005one}). Since $\Delta$ appears as a link in $\Delta(v),$ then $\Delta$ must be Cohen-Macaulay for $\Delta(v)$  to be Cohen-Macaulay.  Thus,  Cohen-Macaulayness is a topological property shared by both $\Delta$ and its mixed wreath products. 
\end{remark}

\begin{remark}\label{rem:topprop}
It can also be shown that the mixed wreath product preserves topological properties such as being a cone, non-evasiveness, collapsibility, contractibility, $\mathbb{Z}$-acyclicity. (see \cite[Section 4.1]{joswig2005one}).
\end{remark}

\section{Weighted Simplicial Complexes}\label{Polar}

In this section, we focus on weighted simplicial complexes and propose a new approach to study them by introducing the  \emph{polarization} operation. Application of polarization to a weighted simplicial complex results with a mixed wreath product of the underlying simplicial complex.

\begin{definition}
Let $\Delta$ be a simpicial complex on the vertex set $V=\{x_1,\ldots, x_n\}$ and let $w: V \rightarrow \NN$ be a  function called \emph{weight function} which assigns a weight to each vertex of $\Delta.$ Let $w_i:= w(x_i)$ for each $i \in [n].$   

A \emph{vertex-weighted simpicial complex} is a pair $(\Delta,w)$ consisting of a simplicial complex $\Delta$ and a weight function $w$ on the set of vertices of $\Delta.$   For the remainder of the paper, we drop the term vertex and we read $(\Delta,w)$ as a weighted simplicial complex. 
\end{definition}

A weighted analog of Stanley-Reisner correspondence can be established by defining Stanley-Reisner ideals for weighted simplicial complexes.

\begin{definition}
Let $(\Delta,w)$ be a weighted simplicial complex on the vertex set  $V=\{x_1,\ldots, x_n\}$ with a weight function $w.$
The \emph{Stanley-Reisner ideal} of $(\Delta,w)$ is the ideal $ I_{(\Delta,w)}$ of $R=k[x_1,\ldots, x_n]$ which is generated by those monomials $\textbf{x}_F$ with $F \notin \Delta,$ in other words, 
$$I_{(\Delta,w)}= (\textbf{x}_F ~:~ F \in \mathcal{N} (\Delta))$$
where  $\displaystyle \textbf{x}_F:= \prod_{x_i \in F} x_i^{w_i}.$
\end{definition}

\begin{notation}
We use $x_1, x_2, \ldots$ to denote both vertices of a (weighted) simplicial complex and variables of a polynomial ring $R$ in this section.  For simplicity, we use $a, b, c, \ldots$ for variables and vertices in the examples.
\end{notation}

 \begin{example}

\begin{figure}[h]
\includegraphics[width=0.5\textwidth]{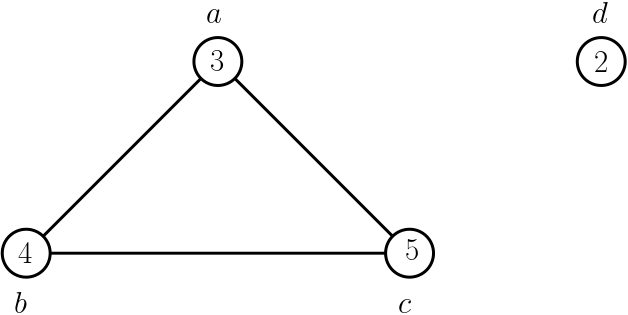}
\caption{Weighted simplicial complex}
  \label{fig:1}
\end{figure}
Let $(\Delta,w)$ be the weighted simplicial complex in Figure \ref{fig:1} on the vertices $\{a,b,c,d\}$ with weights $(3,4,5,2),$ in order. The Stanley-Reisner ideal of  $(\Delta,w)$   in $k[a,b,c,d]$ is
 $$ I_{(\Delta,w)} = ( a^3b^4c^5, a^3d^2, b^4d^2,c^5d^2 ).$$
\end{example}

\subsection{Polarization of weighted simpicial complexes} 

Polarization is a technique first used by Hartshorne in \cite{hartshorne1966} and, following his work, it became a popular tool in the study of monomial ideals. Polarization construction for weighted simplicial complexes borrows  ideas from the polarization of monomial ideals and it is defined as follows.

\begin{definition}\label{def:pol}
Let  $(\Delta,w)$ be a weighted simplicial complex on $n$ vertices. The \emph{polarization} of $(\Delta,w)$,  denoted by $(\Delta,w)^{\pol},$ is a simplicial complex on the new vertex set $V^{\pol}= \{x_{i,j} ~:~ 1 \leq i \leq n, 1 \leq j \leq w_i\}$ where $w_i$ is the weight of $x_i$ for each $i \in [n].$
\begin{itemize}
\item The minimal non-faces of $(\Delta,w)^{\pol}$  are all those subsets of the form 
$$ \bigcup_{x_i \in  F} \{x_{i,1}, \ldots, x_{i,w_i}\} $$
 where $F$ is a minimal non-face of $\Delta.$ 
\item The facets of $(\Delta,w)^{\pol}$ are all those subsets of $V^{\pol}$ of the form
$$ \bigcup_{x_i \in F} \{x_{i,1}, \ldots, x_{i,w_i}\} \cup \bigcup_{x_j \notin F} \{x_{j,1}, \ldots,  \widehat{x_{j,k}} , \ldots, x_{j,w_j}\}$$
where $F $ is a facet of $\Delta.$
\end{itemize}
\end{definition}

\begin{fact}
Let $(\Delta,w)$ be weighted simplicial complex on $n$ vertices.  The polarization of $(\Delta,w)$ is the mixed wreath product of $\Delta.$ More specifically, 
$$(\Delta,w)^{\pol} = \partial \Delta_{(w_1-1,\ldots, w_n-1)} \wr  \Delta.$$
\end{fact}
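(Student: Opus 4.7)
The proof is essentially a direct comparison of the two constructions once the indexing is aligned, so the plan is to show that both sides have the same vertex set (up to relabeling) and the same set of facets. Since a simplicial complex is uniquely determined by its facets, this suffices.

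The first step is to line up the vertex sets. In $(\Delta,w)^{\pol}$ the vertices are labeled $\{x_{i,j} : 1 \leq i \leq n,\ 1 \leq j \leq w_i\}$, so there are $w_i$ copies associated with the original vertex $x_i$. In the mixed wreath product with the parameters $d_i := w_i-1$, the boundary $\partial \Delta_{d_i}$ is the boundary of a $(w_i-1)$-simplex, which has $w_i$ vertices, and by \Cref{def:mixedwreath} the vertex set of $\partial \Delta_{(w_1-1,\ldots,w_n-1)} \wr \Delta$ is $\bigcup_{i=1}^n \{v_{i,0},\ldots,v_{i,w_i-1}\}$. Identifying $x_{i,j}$ with $v_{i,j-1}$ produces a bijection between the two vertex sets that respects the partition indexed by $i \in [n]$.

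The second step is to match facets. A facet of $(\Delta,w)^{\pol}$ has the form
\[
\bigcup_{x_i \in F} \{x_{i,1},\ldots, x_{i,w_i}\} \cup \bigcup_{x_j \notin F} \{x_{j,1}, \ldots, \widehat{x_{j,k}}, \ldots, x_{j,w_j}\}
\]
for some facet $F$ of $\Delta$ and some choice of omitted index $k$. Under the relabeling $x_{i,j} \mapsto v_{i,j-1}$, this is exactly
\[
\bigcup_{v_i \in F} \{v_{i,0},\ldots, v_{i,d_i}\} \cup \bigcup_{v_j \notin F} \{v_{j,0}, \ldots, \widehat{v_{j,k-1}}, \ldots, v_{j,d_j}\},
\]
which is the general form of a facet of $\partial \Delta_{(w_1-1,\ldots,w_n-1)} \wr \Delta$ given in \Cref{def:mixedwreath}. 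As $F$ ranges over facets of $\Delta$ and the omitted indices range over all admissible values on both sides, the two sets of facets coincide.

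I do not expect a real obstacle here — the statement is essentially tautological once the bijection $x_{i,j} \leftrightarrow v_{i,j-1}$ is installed, since \Cref{def:pol} and \Cref{def:mixedwreath} are structurally identical after the substitution $d_i = w_i - 1$. The only bookkeeping point worth verifying is that the minimal non-face description in \Cref{def:pol} is also consistent with the mixed wreath product: a subset $\bigcup_{x_i \in F}\{x_{i,1},\ldots,x_{i,w_i}\}$ for $F \in \mathcal{M}(\Delta)$ corresponds, under the relabeling, to the full vertex set of $\partial \Delta_{d_i}$ being present for each $i$ with $v_i \in F$, which is precisely what forbids inclusion in any facet of the mixed wreath product. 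Thus both the facet and minimal non-face descriptions agree, and the identification $(\Delta,w)^{\pol} = \partial \Delta_{(w_1-1,\ldots,w_n-1)} \wr \Delta$ follows.
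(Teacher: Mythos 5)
Your proof is correct and takes the same route the paper does: the paper states this as a \emph{Fact} with no written proof, treating it as immediate from \Cref{def:pol} and \Cref{def:mixedwreath} once one sets $d_i = w_i-1$, and your writeup simply makes the relabeling $x_{i,j}\leftrightarrow v_{i,j-1}$ and the facet-by-facet match explicit. The only point worth noting is that matching facets alone already suffices (a complex is determined by its facets), so the minimal non-face check is a consistency remark about \Cref{def:pol} rather than a needed step.
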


\begin{example}
\begin{figure}[h]
    \centering
    \begin{minipage}{.5\textwidth}
        \centering
\includegraphics[width=0.55\textwidth]{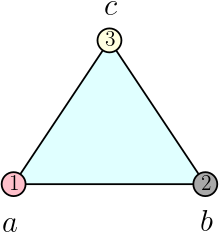}
\caption{$(\Delta,w)$}
  \label{fig:4}
    \end{minipage}%
    \begin{minipage}{0.5\textwidth}
        \centering
\includegraphics[width=0.75\textwidth]{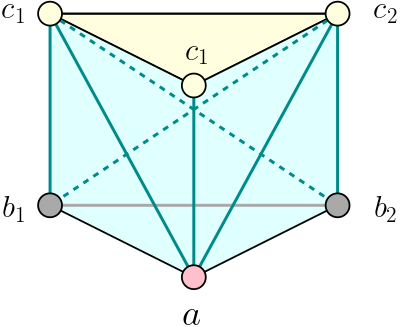}
\caption{$(\Delta,w)^{\pol}$ }
  \label{fig:5}
    \end{minipage}
\end{figure}
In \Cref{fig:4}, given weighted simlicial complex is a full triangle with the weights $(1,2,3).$ Then its polarization is a full 5-simplex displayed in  Figure \ref{fig:5}. In the polarization complex, we use the color of a vertex to denote the full simplex replacing that vertex. 
\end{example}

\begin{example}
 
\begin{figure}[h]
    \centering
    \begin{minipage}{.5\textwidth}
        \centering
\includegraphics[width=0.55\textwidth]{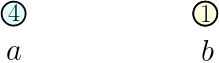}
\caption{$(\Delta,w)$}
  \label{fig:6}
    \end{minipage}%
    \begin{minipage}{0.5\textwidth}
        \centering
\includegraphics[width=0.9\textwidth]{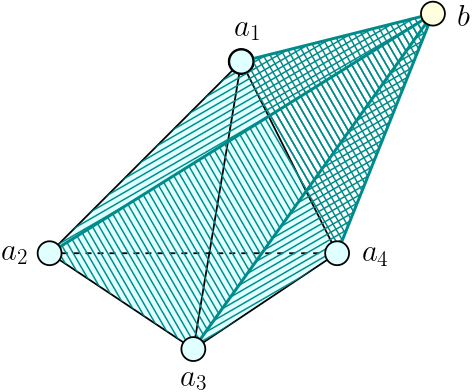}
\caption{$(\Delta,w)^{\pol}$}
  \label{fig:nice}
    \end{minipage}
\end{figure}

Let $(\Delta,w)$ the weighted simplicial complex given in Figure \ref{fig:6}.

 The only minimal non-face of the polarization $(\Delta,w)^{\pol}$  is $ \{a_1,a_2,a_3,a_4,b\}.$ Thus $(\Delta,w)^{\pol}$ is the boundary of  a pentachoron, full 4-simplex, given in \Cref{fig:nice}.

Note that $(I_{(\Delta,w)})^{\pol} = (a^4b)^{\pol}= (a_1a_2a_3a_4b)  = I_{(\Delta,w)^{\pol}}.$

\end{example}

Compared to the literature on the combinatorial and topological properties of simplicial complexes, we have very little information regarding the nature of weighted simplicial complexes. By using the established literature on simplicial complexes, we can view weighted simplicial complexes as combinatorial and topological objects through their underlying simplicial complexes and their polarizations.  

\begin{definition}\label{def:newCT}
Let $(\Delta,w)$ be a weighted simplicial complex and $\mathcal{P}$ be a combinatorial or a topological property. We say  $(\Delta,w)$ satisfies property $\mathcal{P}$ if and only if both $\Delta$ and $(\Delta,w)^{\pol}$ satisfy  $\mathcal{P}.$ 
\end{definition}

\begin{remark} 
A weighted simplicial complex satisfies all the properties which are preserved under the mixed wreath product construction. In particular, as a result of \Cref{cor:properties}, \Cref{rem:CM} and \Cref{rem:topprop}, one can take property $\mathcal{P}$ in \Cref{def:newCT}  to be  vertex-decomposability, shellability, or constructibility, non-evasiveness, collapsibility among other properties.
\end{remark}

Next we consider the relationship between monomial polarization operation and polarization of a weighted simplicial complex. There is a natural connection between the two polarizations through Stanley-Reisner ideals.
\begin{corollary}\label{cor:pol1}
Let $(\Delta,w)$ be a weighted simplicial complex on vertices $\{x_1,\ldots. x_n\}$ with its polarization $(\Delta,w)^{\pol}$ where $R=k[x_1,\ldots,x_n]$ and $R^{\pol}=k[x_{i,j} ~:~ 1 \leq i \leq n, 1 \leq j \leq w_i].$ Then
$$I_{(\Delta,w)^{\pol}} = (I_{(\Delta,w)})^{\pol}.$$
\end{corollary}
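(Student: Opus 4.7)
The plan is to prove the equality of ideals by matching minimal generators on both sides, since both ideals are monomial and we have explicit combinatorial descriptions of their generating sets.

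First, I would unpack the left-hand side using \Cref{def:pol}. By definition, the minimal non-faces of $(\Delta,w)^{\pol}$ are exactly the subsets
$$M_F := \bigcup_{x_i \in F} \{x_{i,1}, \ldots, x_{i,w_i}\}$$
as $F$ ranges over the minimal non-faces of $\Delta$. Applying the Stanley–Reisner correspondence for the \emph{unweighted} simplicial complex $(\Delta,w)^{\pol}$ (whose vertices are just the $x_{i,j}$, each treated with weight $1$), the minimal monomial generators of $I_{(\Delta,w)^{\pol}}$ are precisely
$$\prod_{x_i \in F} \prod_{j=1}^{w_i} x_{i,j}, \qquad F \in \mathcal{M}(\Delta).$$

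Next, I would unpack the right-hand side. The generators of $I_{(\Delta,w)}$ are the monomials $\mathbf{x}_F = \prod_{x_i \in F} x_i^{w_i}$ for $F \in \mathcal{M}(\Delta)$. By the definition of monomial polarization, each power $x_i^{w_i}$ is replaced by the squarefree product $\prod_{j=1}^{w_i} x_{i,j}$, so
$$(\mathbf{x}_F)^{\pol} = \prod_{x_i \in F} \prod_{j=1}^{w_i} x_{i,j},$$
and these give the minimal generators of $(I_{(\Delta,w)})^{\pol}$. Since the generating sets coincide generator-by-generator under the bijection $F \leftrightarrow M_F$, the two ideals are equal.

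The only technical wrinkle is making sure both ideals live in the same polynomial ring. For any vertex $x_i$ that appears in at least one minimal non-face, the maximum power $p_i$ of $x_i$ in a generator of $I_{(\Delta,w)}$ equals $w_i$, so the ambient ring $R^{\pol}$ of the monomial polarization has the variables $x_{i,1},\dots,x_{i,w_i}$ exactly as in \Cref{def:pol}. For a vertex $x_i$ that appears in no minimal non-face, the monomial polarization introduces no $x_{i,j}$ at all; these extra variables in the combinatorial $V^{\pol}$ simply do not occur in any generator on either side, so it is harmless to embed both ideals in the larger polynomial ring $k[x_{i,j} : 1\le i \le n,\ 1 \le j \le w_i]$. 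I do not anticipate a real obstacle — the entire argument is a bookkeeping check that the two definitions of polarization were designed to agree, with the mild edge case about unused variables being the only thing requiring a sentence of justification.
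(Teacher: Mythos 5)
Your proposal is correct and takes essentially the same route as the paper, whose proof is simply the remark that the equality follows immediately from the two definitions of polarization; you have just written out the generator-by-generator matching (and the harmless ambient-ring bookkeeping) that the paper leaves implicit.
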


\begin{proof}
It follows immediately from the definitions of polarization of a monomial ideal and a weighted simplicial complex.
\end{proof}

A monomial ideal  and its polarization share many homological and algebraic properties. Thus, as an immediate consequence of \Cref{cor:pol} and \cite[Corollary 1.6.3]{herzog2010monomial}, we have the following relations.

\begin{corollary}\label{cor:pol}
Let $(\Delta,w)$ be a weighted simplicial complex on vertices $\{x_1,\ldots. x_n\}$ with its polarization $(\Delta,w)^{\pol}$ where $R=k[x_1,\ldots,x_n]$ and $R^{\pol}=k[x_{i,j} ~:~ 1 \leq i \leq n, 1 \leq j \leq w_i].$ Then
\begin{enumerate}
\item[(a)] $\beta_{i,j} (I_{(\Delta,w)}) = \beta_{i,j} (I_{(\Delta,w)^{\pol}})$ for all $i$ and $j.$
\item[(b)] $Hilb (R/ I_{(\Delta,w)},t ) = (1-t)^{\rho} Hilb (S/ I_{(\Delta,w)^{\pol}},t)$ where $\rho = (\sum_{i=1}^n w_i )-n.$
\item[(c)] $\height I_{(\Delta,w)} = \height I_{(\Delta,w)^{\pol}}.$
\item[(d)] $\pd I_{(\Delta,w)} = \pd I_{(\Delta,w)^{\pol}}$ and $\reg I_{(\Delta,w)} = \reg I_{(\Delta,w)^{\pol}}.$
\item[(e)] $R/ I_{(\Delta,w)}$ is Cohen-Macaulay (resp. Gorenstein) if and only if $R^{\pol}/ I_{(\Delta,w)^{\pol}}$ is Cohen-Macaulay (resp. Gorenstein).
\end{enumerate}
\end{corollary}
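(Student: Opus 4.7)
The plan is to reduce all five statements to the standard theory of polarization of monomial ideals by invoking \Cref{cor:pol1}, which identifies the Stanley-Reisner ideal $I_{(\Delta,w)^{\pol}}$ with the monomial polarization $\bigl(I_{(\Delta,w)}\bigr)^{\pol}$. Once this identification is in hand, the conclusions (a)--(e) are precisely the content of \cite[Corollary 1.6.3]{herzog2010monomial} applied to the monomial ideal $I = I_{(\Delta,w)}$ in $R$. So the proof becomes a matter of assembling the invocations cleanly and tracking the value of $\rho$.

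The structural mechanism underlying every part is the existence of a regular sequence of linear forms on $R^{\pol}/I_{(\Delta,w)^{\pol}}$. Explicitly, after identifying $x_i$ with $x_{i,1}$, one considers the $\rho = (\sum_{i=1}^n w_i)-n$ differences $\theta_{i,j} := x_{i,1} - x_{i,j}$ for $1 \le i \le n$ and $2 \le j \le w_i$; standard polarization theory guarantees that these form a regular sequence on $R^{\pol}/I_{(\Delta,w)^{\pol}}$ and that the resulting quotient is isomorphic to $R/I_{(\Delta,w)}$. From this single fact, each part of the corollary follows from a classical invariant-preservation statement under quotienting by a regular sequence of linear forms: graded Betti numbers are preserved, giving (a), and consequently $\pd$ and $\reg$ are preserved, giving (d); the Hilbert series acquires one factor of $(1-t)$ for each linear form in the regular sequence, giving (b); heights (equivalently codimensions) are preserved, giving (c); and Cohen-Macaulayness and Gorensteinness both ascend and descend along regular sequences of linear forms, giving (e).

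The main bookkeeping point to verify is the length $\rho$ of the regular sequence. Since polarizing the variable $x_i$ introduces $w_i$ polarized variables $x_{i,1},\ldots,x_{i,w_i}$ but only one of these, namely $x_{i,1}$, corresponds to the original $x_i$, the number of new variables added is $w_i - 1$. Summing gives $\rho = \sum_{i=1}^n (w_i - 1) = \bigl(\sum_{i=1}^n w_i\bigr) - n$, matching the statement of (b) and yielding the correct codimension drop in (c). No substantive obstacle arises beyond this accounting, since \Cref{cor:pol1} places us squarely in the framework of \cite[Corollary 1.6.3]{herzog2010monomial} and no further technical input is required.
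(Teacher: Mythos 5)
Your proposal is correct and follows the same route as the paper: the paper likewise derives all five parts immediately from the identification $I_{(\Delta,w)^{\pol}} = (I_{(\Delta,w)})^{\pol}$ of \Cref{cor:pol1} together with \cite[Corollary 1.6.3]{herzog2010monomial}, offering no further argument. Your additional explanation of the underlying regular sequence of linear forms $x_{i,1}-x_{i,j}$ and the count $\rho=\sum_{i=1}^n(w_i-1)$ is accurate and simply unpacks what that cited corollary provides.
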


\begin{remark}
It follows from part (e) of the \Cref{cor:pol} and \Cref{rem:CM} that $R/I_{(\Delta,w)}$ is Cohen-Macaulay if and only if $R/I_{\Delta}$ is Cohen-Macaulay.
\end{remark}

We conclude this chapter with the following question which serves as the main motivation behind the next  chapter.

\begin{question}
What other algebraic relations can be established between $R/I_{(\Delta,w)}$ and $R/I_{\Delta}$?
\end{question}

\section{Weighted Monomial Ideals}\label{weightedmonomial}

In the previous chapter, the weight function defined on the set of vertices enabled us to obtain a non-squarefree monomial ideal from a given squarefree monomial ideal, namely, the Stanley-Reisner ideal of a simplicial complex. One can generalize this procedure to any monomial ideal and obtain a highly ``symmetrical" monomial ideal. We achieve this by defining a weight function on the set of variables of  a polynomial ring. 

\begin{definition}
Let $R=k[x_1,\ldots, x_n]$ be a polynomial ring and $w : \{x_1,\ldots, x_n\} \rightarrow \NN$  be a function. We call  $w$  a \emph{weight function} on the set of variables of $R$ and use $w_i$ to denote the value of the function $w$ at variable $x_i.$  Throughout the paper, we read the notation $w_i$ as the \emph{weight} of $x_i.$
\end{definition}

\begin{definition}\label{def:wideal}
Let $I \subseteq R=k[x_1,\ldots, x_n]$ be a monomial ideal and $w$ be a weight function on $\{x_1,\ldots, x_n\}.$ The \emph{weighted ideal} of $I$ with respect to $w$ is denoted by $(I,w)$ and defined as 
$$(I,w) =(x^{w(\textbf{b})} ~:~ x^{\textbf{b}} \in \G(I) )$$
where $w (\textbf{b}) = (w_1b_1, \ldots, w_nb_n)$ for an exponent vector $\textbf{b} = (b_1,\ldots, b_n) \in \NN^n.$
\end{definition}

\begin{example}
Let $I=(x_1x_2,x_2x_3x_4,x_5)  \subseteq k[x_1,\ldots, x_5]$ and let $(2,3,4,5,2)$ be the weights of the variables, in order. Then weighted ideal of $I$ is
$$(I,w)=(x_1^2x_2^3,x_2^3x_3^4x_4^5,x_5^2).$$
\end{example}

Weight functions defined on vertices of a simplicial complex and variables of a polynomial ring coincide by identifying vertices  $\{x_1,\ldots, x_n\}$ of a simplicial complex $\Delta$ with variables of a polynomial ring $R=k[x_1,\ldots, x_n].$  As a result of this identification, notions of the Stanley-Reisner ideal  of $(\Delta,w)$ agree with the weighted ideal of the Stanley-Reisner ideal of $\Delta.$ 

\begin{corollary}
Let $(\Delta,w)$ be a weighted simplicial complex with weight function $w$ on the vertices of simplicial complex $\Delta.$ Then 
$$I_{(\Delta,w)} = (I_{\Delta},w).$$
\end{corollary}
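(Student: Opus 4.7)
The plan is a direct comparison of the monomial generating sets of the two ideals in $R=k[x_1,\ldots,x_n]$; since both are monomial ideals, matching their generators is enough. The identification that makes this possible is already highlighted just above the statement: the weight function $w$ on the vertices of $\Delta$ and the weight function $w$ on the variables of $R$ are the same data once one identifies $x_i\in V(\Delta)$ with the variable $x_i\in R$, so $w_i$ has the same meaning in both settings.

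First I would recall from Section~\ref{ch:pre} that $I_\Delta$ has minimal monomial generating set
\[
\G(I_\Delta) \;=\; \Bigl\{\,\prod_{x_i\in F} x_i \;:\; F\in\mathcal{M}(\Delta)\,\Bigr\},
\]
and that each such generator corresponds to the exponent vector $\mathbf{b}_F\in\{0,1\}^n$ whose $i$-th coordinate is $1$ precisely when $x_i\in F$. Next I would invoke Definition~\ref{def:wideal} and apply the weighting variable-by-variable: for each $x^{\mathbf{b}_F}\in\G(I_\Delta)$, the corresponding generator of $(I_\Delta,w)$ is
\[
x^{w(\mathbf{b}_F)} \;=\; \prod_{i=1}^n x_i^{w_i b_{F,i}} \;=\; \prod_{x_i\in F} x_i^{w_i} \;=\; \mathbf{x}_F,
\]
which is exactly the generator of $I_{(\Delta,w)}$ attached to the minimal non-face $F$. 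Ranging $F$ over $\mathcal{M}(\Delta)$ yields equality of generating sets, hence $(I_\Delta,w)=I_{(\Delta,w)}$.

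There is essentially no obstacle here; the only point worth a brief remark is that, even if one reads $I_{(\Delta,w)}$ as generated by $\mathbf{x}_F$ for \emph{all} non-faces $F$ (rather than only the minimal ones), the resulting ideal is the same: any non-face $G$ contains some $F\in\mathcal{M}(\Delta)$, and since each $w_i\in\NN$, the monomial $\mathbf{x}_G$ is divisible by $\mathbf{x}_F$, so the additional generators are redundant. This makes the identification of $I_{(\Delta,w)}$ with $(I_\Delta,w)$ purely definitional, and the corollary follows.
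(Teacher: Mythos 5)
Your proposal is correct and matches the paper's treatment: the paper presents this corollary as an immediate consequence of identifying the weight function on vertices with the weight function on variables, and your generator-by-generator comparison simply makes that definitional identification explicit. Your closing remark resolving the ambiguity between minimal non-faces and all non-faces in the definition of $I_{(\Delta,w)}$ is a sensible addition but does not change the substance of the argument.
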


Due to this connection, we can establish a weighted version of the one-to-one correspondence between squarefree monomial ideals and simplicial complexes. 

\begin{center}
\begin{tikzcd}
\{ \text{simplicial  complexes}  \}  \arrow[r]   \arrow[d ]
        &  \{ \text{squarefree monomial ideals} \}    \arrow[l]   \arrow[d]\\
\{ \text{weighted simplicial  complexes}  \}    \arrow[r]   \arrow[u]   
&  \{ \text{weighted squarefree monomial ideals}\} \arrow[u]     \arrow[l]  
\end{tikzcd}
\end{center}

\subsection{Betti numbers of monomial ideals and their weighted ideals}

In the study of algebraic invariants of any monomial ideal $I$,  upper Koszul simplicial complexes introduced  in \cite{bayer1999}  are commonly used objects to investigate Betti numbers of $I.$  We recall the definition of this simplicial complex below.

\begin{definition}\label{def:koszul}
Let $I \subseteq R=k[x_1,\ldots, x_n]$ be a monomial ideal and let $\textbf{b}=(b_1, \ldots, b_n) \in \mathbb{N}^n$ be a $\mathbb{N}^n$-graded degree. The \emph{upper-Koszul simplicial complex} associated to $I$ at degree $\textbf{b},$ denoted by $K^{\textbf{b}}(I),$ is the simplicial complex over $V=\{x_1, \ldots, x_n\}$ whose faces are
$$\Big\{ W \subseteq V ~ | ~  x^{\textbf{b} - \tau } \in I \Big\}$$
where $x^{\textbf{b}} = x_1^{b_1} \cdots x_n^{b_n} $ and  $\displaystyle x^{\tau} = x_1^{\tau_1} \cdots x_n^{\tau_n}=\prod_{x_i \in W} x_i $ for a squarefree vector $\tau \in \{0,1\}^n.$
\end{definition}

One can compute the multigraded Betti numbers of a monomial ideal $I\subseteq R$ by using the following version of Hochster's formula  from \cite{bayer1999}.
\begin{eqnarray}\label{eq:koszul}
\beta_{i, \textbf{b}} (I) = \dim_k \widetilde{H}_{i-1} (K^{\textbf{b}}(I);k) \text{ for } i \ge 0 \text{ and } \textbf{b} \in \mathbb{N}^n.
\end{eqnarray}

Note that graded and multigraded Betti numbers are related, i.e., $\displaystyle \beta_{i,j} (I)= \sum_{\textbf{b} \in \mathbb{N}^n, |\textbf{b}| =j } \beta_{i, \textbf{b}} (I). $

\begin{remark}
Let $I \subseteq R=k[x_1,\ldots, x_n]$ be a monomial ideal and $\textbf{b} \in \mathbb{N}^n.$
 Suppose that $x^{\textbf{b}}$ is not the least common multiple of some subset of the minimal monomial generators of $I.$ Then $K^{\textbf{b}}(I)$ is a cone over some subcomplex. Thus all non-zero Betti numbers occur in $\mathbb{N}^n$-graded degrees $\textbf{b}$ for which $x^{\textbf{b}}$  equals at least a common multiple of some minimal generators.  In the case of  squarefree monomial ideals  all non-zero Betti numbers lie in squarefree degrees. 
\end{remark}

We first investigate the connection between upper Koszul complexes of a monomial ideal and its weighted ideal. It turns out that they are equal when degrees are adjusted according to weight values.
\begin{lemma}
Let $I \subseteq R=k[x_1,\ldots, x_n]$ be a monomial ideal and $(I,w)$ be its weighted ideal in $R.$
For any $\textbf{b} \in \mathbb{N}^n,$ we have
$$K^{\textbf{b}} (I) = K^{w(\textbf{b})} ((I,w))$$
where $w(\textbf{b}) = (w_1b_1, \ldots, w_nb_n).$
\end{lemma}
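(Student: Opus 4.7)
The plan is to unwind both simplicial complexes directly from Definition \ref{def:koszul} and then compare membership conditions componentwise. Write $\tau_W \in \{0,1\}^n$ for the characteristic vector of a subset $W \subseteq V$. Then $W \in K^{\textbf{b}}(I)$ exactly when $x^{\textbf{b} - \tau_W} \in I$, which in turn happens iff some minimal generator $x^{\textbf{a}}$ of $I$ divides $x^{\textbf{b}-\tau_W}$, i.e.\ $\textbf{a} \le \textbf{b} - \tau_W$ componentwise. Analogously, $W \in K^{w(\textbf{b})}((I,w))$ iff some generator $x^{w(\textbf{a})}$ of $(I,w)$ divides $x^{w(\textbf{b})-\tau_W}$, i.e.\ $w(\textbf{a}) \le w(\textbf{b}) - \tau_W$ componentwise. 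Since the map $x^{\textbf{a}} \mapsto x^{w(\textbf{a})}$ is a bijection between the (minimal) generators of $I$ and those of $(I,w)$, by Definition \ref{def:wideal}, it suffices to prove the equivalence
\[
\textbf{a} \le \textbf{b} - \tau_W \iff w(\textbf{a}) \le w(\textbf{b}) - \tau_W
\]
for each fixed generator $x^{\textbf{a}}$ of $I$.

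I would verify this equivalence coordinatewise, splitting on whether $(\tau_W)_i = 0$ or $1$. When $(\tau_W)_i = 0$, the two inequalities become $a_i \le b_i$ and $w_i a_i \le w_i b_i$, which are equivalent since $w_i \ge 1$. When $(\tau_W)_i = 1$, they become $a_i \le b_i - 1$ and $w_i a_i \le w_i b_i - 1$; here one direction is $a_i \le b_i - 1 \Rightarrow w_i a_i \le w_i b_i - w_i \le w_i b_i - 1$, and the other is $w_i a_i \le w_i b_i - 1 \Rightarrow w_i a_i < w_i b_i \Rightarrow a_i < b_i \Rightarrow a_i \le b_i - 1$, all using $w_i \ge 1$ and integrality. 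This gives the coordinatewise equivalence, hence the required equality of simplicial complexes.

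The only mild subtlety, and the step I would flag as the "hardest" (though it is really just bookkeeping), is the case $(\tau_W)_i = 1$: one must check that subtracting a single $1$ on the right is compatible with scaling by $w_i$, which relies crucially on the weights being positive integers. Apart from that, the argument is a purely formal comparison of divisibility conditions, so the proof should occupy only a few lines.
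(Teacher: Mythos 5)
Your proposal is correct and takes essentially the same approach as the paper's proof: both reduce the claim to a coordinatewise comparison of exponents, with the forward direction resting on $w_i\tau_i \ge \tau_i$ and the reverse on integrality to pass from $w_ia_i \le w_ib_i - 1$ to $a_i \le b_i - 1$. The only cosmetic difference is that the paper presents the two inclusions separately rather than as a single coordinatewise equivalence.
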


\begin{proof}
Let $W \subseteq \{x_1,\ldots, x_n\}$ be a face in $ K^{\textbf{b}} (I)$ with $ x^{\tau} = \prod_{x_i \in W} x_i $ such that $x^{\textbf{b}-\tau} \in I.$ Then there exists $x^{\textbf{a}} \in \G(I) $  dividing $x^{\textbf{b}-\tau},$ in particular, $x_i^{a_i}$ divides $x_i^{b_i-\tau_i}$ for each $i \in [n].$  Thus $x_i^{w_ia_i}$ divides $x_i^{w_ib_i-w_i \tau_i}$ for each $i \in [n]$ and  we conclude that $x^{w(\textbf{b})-w (\tau)}$  is divisible by $x^{w(\textbf{a})}.$ Therefore, $x^{w(\textbf{a})} \in \G((I,w))$  divides  $x^{w(\textbf{b})- \tau}$ and  $W$ must be a face in  $\in K^{w(\textbf{b})} ((I,w)).$

To prove the reverse inclusion, let $W$ be a face in $ K^{w(\textbf{b})} ((I,w))$ with   $ x^{\tau} = \prod_{x_i \in W} x_i$ for $\tau \in\{0,1\}^n.$ Then there exists $x^{w(\textbf{a})} \in \G((I,w))$ where $x^\textbf{a} \in \G(I)$ such that $x^{w(\textbf{a})}$ divides $ x^{w(\textbf{b})- \tau}$. This implies that $x_i^{w_ia_i} $ divides $x_i^{w_ib_i -\tau_i}$ which is equivalent to $w_ia_i \leq w_ib_i -\tau_i  $ for each $ i \in [n].$ Since $\tau \in \{0,1\}^n,$ each $\tau_i$ is either 0 or 1. If $\tau_i =0,$ then $a_i \leq b_i -\tau_i$ and if $\tau_i =1,$ we have $a_i  < b_i$ which is equivalent to $a_i  \leq  b_i -\tau_i.$ Thus $a_i \leq b_i -\tau_i$ for all $i$ and   $x^\textbf{a}$ must divide $x^{\textbf{b} -\tau}.$ Hence, $W$ is a face in $ K^{\textbf{\textbf{b}}} (I).$
\end{proof}

As a corollary of the previous lemma, we can establish the following relationship between the Betti numbers of an ideal and its weighted ideal by using  \eqref{eq:koszul}. 

\begin{corollary}\label{cor:betti}
Let $I \subseteq R=k[x_1,\ldots, x_n]$ be a monomial ideal and $(I,w)$ be its weighted ideal in $R.$ Then 
$$\beta_{i, (j_1, \ldots, j_n)} (I) = \beta_{i,(w_1j_1, \ldots, w_n j_n)} ((I,w))$$
for each $(j_1, \ldots, j_n) \in \mathbb{N}^n.$
\end{corollary}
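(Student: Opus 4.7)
The plan is to apply the multigraded Hochster-type formula in \eqref{eq:koszul} twice, once to $I$ and once to its weighted ideal $(I,w)$, and then use the preceding lemma to identify the two upper Koszul simplicial complexes so the reduced homologies coincide.

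Concretely, I would fix a multidegree $\textbf{b}=(j_1,\ldots,j_n) \in \mathbb{N}^n$ and apply \eqref{eq:koszul} to $I$ to write
$$\beta_{i,\textbf{b}}(I) = \dim_k \widetilde{H}_{i-1}(K^{\textbf{b}}(I);k).$$
Next, I would apply the same formula to $(I,w) \subseteq R$ at the rescaled multidegree $w(\textbf{b}) = (w_1 j_1, \ldots, w_n j_n)$, obtaining
$$\beta_{i,w(\textbf{b})}((I,w)) = \dim_k \widetilde{H}_{i-1}(K^{w(\textbf{b})}((I,w));k).$$
The preceding lemma gives the equality of simplicial complexes $K^{\textbf{b}}(I) = K^{w(\textbf{b})}((I,w))$ on the vertex set $\{x_1,\ldots,x_n\}$. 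Consequently their reduced simplicial homology groups with coefficients in $k$ agree in every homological degree, and in particular the $(i-1)$-st groups have the same $k$-dimension. Combining the two displayed equalities then yields the desired identity $\beta_{i,\textbf{b}}(I) = \beta_{i,w(\textbf{b})}((I,w))$.

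The main obstacle is essentially already overcome by the preceding lemma; the only point that requires a quick sanity check is that \eqref{eq:koszul} is being applied in a multidegree where it makes sense for both ideals. This is automatic because the formula holds for every $\textbf{c} \in \mathbb{N}^n$: when $x^{\textbf{c}}$ fails to be a least common multiple of some subset of minimal generators, the upper Koszul complex is a cone and both sides vanish, which is consistent with the asserted equality. Moreover, since componentwise scaling by positive weights preserves divisibility, one has $\G((I,w)) = \{x^{w(\textbf{a})} : x^{\textbf{a}} \in \G(I)\}$, so the multidegrees of the form $w(\textbf{b})$ are exactly the ones relevant for $(I,w)$ corresponding to the relevant multidegrees for $I$, confirming the bijection underlying the statement.
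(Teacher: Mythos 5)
Your proof is correct and follows exactly the paper's route: the paper also deduces this corollary directly from the preceding lemma identifying $K^{\textbf{b}}(I)$ with $K^{w(\textbf{b})}((I,w))$, combined with the Hochster-type formula \eqref{eq:koszul} applied to both ideals. Your additional sanity check about degrees where the formula applies is harmless and consistent with the paper's remark that non-least-common-multiple degrees give cones and hence vanishing Betti numbers on both sides.
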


Above relationship between the Betti numbers of $I$ and $(I,w)$ indicates equivalence of certain algebraic invariants and properties of a monomial ideal and its weighted ideal.

\begin{corollary}\label{cor:equalities}
Let $I \subseteq R=k[x_1,\ldots, x_n]$ be a monomial ideal and $(I,w)$ be its weighted ideal in $R.$ Then 
\begin{enumerate}
\item[(a)] $\pd ((I,w)) = \pd (I) \text{ and } \depth ((I,w)) = \depth (I).$
\item[(b)]  $R/(I,w)$ is Cohen-Macaulay if and only if $R/I$ is Cohen-Macaulay. 
\end{enumerate}
\end{corollary}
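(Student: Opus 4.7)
The plan is to derive both statements as immediate consequences of Corollary \ref{cor:betti}, which gives a bijection $\beta_{i,\mathbf{b}}(I) = \beta_{i,w(\mathbf{b})}((I,w))$ that preserves the homological index $i$. The only extra ingredient I will need is a short observation about minimal primes of monomial ideals in order to handle the Krull-dimension side of the Cohen-Macaulay criterion.

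For part (a), I would first note that since the above bijection preserves $i$, the set of indices $i$ at which nonzero multigraded Betti numbers occur is identical for $I$ and $(I,w)$. Summing over multidegrees, $\beta_{i,j}(I) \ne 0$ for some $j$ if and only if $\beta_{i,j'}((I,w)) \ne 0$ for some $j'$. Taking the maximum over such $i$ gives $\pd(I) = \pd((I,w))$. Since $R$ is a polynomial ring over a field, both $I$ and $(I,w)$ satisfy the Auslander-Buchsbaum formula, so
\[
\depth(I) = n - \pd(I) = n - \pd((I,w)) = \depth((I,w)),
\]
which completes (a).

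For part (b), the Cohen-Macaulay condition is $\depth(R/I) = \dim(R/I)$. From the short exact sequence $0 \to I \to R \to R/I \to 0$ we have $\pd(R/I) = \pd(I) + 1$, so the equality of projective dimensions in (a) yields $\depth(R/I) = \depth(R/(I,w))$. It remains to verify $\dim(R/I) = \dim(R/(I,w))$, equivalently $\height(I) = \height((I,w))$. This is the only step that does not follow from Corollary \ref{cor:betti} alone: for a monomial ideal, the minimal primes are generated by variables and correspond bijectively to minimal transversals of the collection of supports of the minimal generators. Since passing from $I$ to $(I,w)$ only rescales exponents and leaves every generator's support unchanged, $I$ and $(I,w)$ have exactly the same minimal primes, hence the same height and the same Krull dimension. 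Combining this with the depth equality gives the biconditional in (b).

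The one place requiring care — and the main, if modest, obstacle — is the height step in part (b); once the minimal-primes-depend-only-on-supports observation is made explicit, everything else is an automatic transfer through Corollary \ref{cor:betti} and Auslander-Buchsbaum.
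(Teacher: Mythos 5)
Your proof is correct and follows essentially the same route as the paper: projective dimension is transferred through \Cref{cor:betti}, depth follows from Auslander--Buchsbaum, and the Krull dimensions agree because $I$ and $(I,w)$ have the same radical (your ``minimal primes depend only on supports'' observation is just a more explicit phrasing of that). The only cosmetic difference is that you spell out the height step in terms of minimal transversals of generator supports, whereas the paper simply notes the equality of radicals.
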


\begin{proof}
Equality of projective dimensions follows from Corollary \ref{cor:betti}. Since the projective dimensions are the same, we obtain the equality of depths from the Auslander-Buchsbaum formula.

Ideals $I$ and $(I,w)$ have the same radical ideal, thus their heights are equal and  $\dim (R/(I,w)) = \dim (R/I).$ Since $ \depth ((I,w)) = \depth (I)$, Cohen-Macaulayness is preserved under the weighting operation.
\end{proof}

\subsection{Associated primes of monomial ideals and their weighted ideals} In this subsection, we investigate the relationship between associated primes and primary decomposition of monomial ideals and their weighted ideals.

A primary decomposition of an ideal can be thought as an analogue of the unique factorization of integers to Noetherian rings. Primary decompositions of an ideal carries several important information about the ideal such as its associated primes, minimal primes, embedded primes, symbolic powers.  However, finding a primary decomposition for an ideal is a very difficult task, indeed, it is an  NP complete problem (see \cite{hocsten2002monomial}).  In the case of monomial ideals, structure of primary decompositions is well understood. 

\begin{definition}
Let $R$ be a Noetherian ring and $M$ be a finitely generated $R$-module.  A prime ideal $\mathfrak{p} $ in $R$ is called an \emph{associated prime ideal} of $M,$  if there exists an element $x \in M$ such that $ \mathfrak{p} = \Ann (x)$ where $\Ann(x)$ denotes the annihilator of  $x,$ i.e., $\Ann(x) = \{ r \in R ~:~ rx=0\}.$ The set of associated prime ideals of $M$ is denoted by $\Ass(M)$ (see \cite{Herzog2011} for definitions of primary decomposition, minimal prime, embedded prime and the remaining related terminology). 
\end{definition}

We recall the following fundamental fact about irredundant primary decomposition of monomial ideals.

\begin{theorem}\cite[Theorem 1.3.1, and Proposition 1.3.7]{Herzog2011}\label{thm:mondec}
Let $I \subset R=k[x_1,\ldots, x_n]$ be a monomial ideal. Then $I = \bigcap_{i=1}^k Q_i,$ where each $Q_i$ is generated by pure powers of the variables, is an irredundant primary decomposition of $I$. In other words, each $Q_i$ is of the form $(x_{i_1}^{a_1}, \ldots, x_{i_r}^{a_r})$ and $P$-primary component of $I$ is the intersection of all $Q_i$ with $\Ass(Q_i)=P$ where $P= (x_{i_1}, \ldots, x_{i_r}).$ Moreover, an irredundant primary decomposition of this form is unique.
\end{theorem}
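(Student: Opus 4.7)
The plan is to prove the theorem in three stages: existence of a decomposition whose components are generated by pure powers of variables, verification of the primary structure of these components, and uniqueness of the irredundant decomposition.

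For existence, I would induct on the number of minimal monomial generators of $I$ that involve more than one distinct variable. If every minimal generator is already a pure power, the claim is immediate with $I$ itself as the only component. Otherwise, fix such a generator $m = x_{i_1}^{a_1}\cdots x_{i_s}^{a_s}$ with $s \geq 2$, set $u = x_{i_1}^{a_1}$ and $v = x_{i_2}^{a_2}\cdots x_{i_s}^{a_s}$ so that $u$ and $v$ are coprime with $m = uv$, and let $I'$ be generated by the remaining minimal generators of $I$. The crucial identity
$$I = (I',m) = (I',u) \cap (I',v)$$
follows from coprimality: a monomial in the right-hand intersection that is not in $I'$ must be divisible by both $u$ and $v$, hence by $uv = m$. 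Both $(I',u)$ and $(I',v)$ have strictly fewer non-pure-power minimal generators than $I$, so the induction closes and iterating yields $I$ as a finite intersection of ideals each generated by pure powers.

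For the primary structure, I would verify that each component $Q_j = (x_{j_1}^{a_1},\ldots,x_{j_r}^{a_r})$ is $P_j$-primary with $P_j = (x_{j_1},\ldots,x_{j_r})$. This follows from the isomorphism $R/Q_j \cong A \otimes_k k[x_l : l \notin \{j_1,\ldots,j_r\}]$, where $A = k[x_{j_1},\ldots,x_{j_r}]/(x_{j_1}^{a_1},\ldots,x_{j_r}^{a_r})$ is local Artinian with maximal ideal the image of $P_j$, forcing $P_j$ to be the unique associated prime of $R/Q_j$. Discarding redundant components yields an irredundant decomposition; since an intersection of $P$-primary ideals is again $P$-primary, grouping the $Q_j$ by $\Ass(R/Q_j) = P$ and intersecting recovers the $P$-primary component of $I$.

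Uniqueness will be the main obstacle, because standard primary decomposition theory gives uniqueness only for minimal primary components, leaving embedded components ambiguous in general. My approach would be to reinterpret each $Q_j$ as an \emph{irreducible} monomial ideal and argue that the irredundant irreducible decomposition of a monomial ideal is intrinsically determined by $I$. The combinatorial heart is that each $Q_j$ corresponds to an essential ``maximal corner'' in the complement of $I$ in the monoid of monomials: a monomial close to $x_{j_1}^{a_1-1}\cdots x_{j_r}^{a_r-1}$, suitably padded by high powers of the remaining variables, which lies in every other component but not in $Q_j$, witnessing the necessity of $Q_j$ and tying $Q_j$ intrinsically to $I$. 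Showing that the set of such corners is determined by $I$ alone is the delicate part; once this is done, grouping by associated prime yields uniqueness of the primary decomposition in the stated form.
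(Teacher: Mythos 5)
The paper does not actually prove this statement---it is quoted from Herzog and Hibi, and the only thing the paper supplies is a worked example of the splitting $I=(I',u)\cap(I',v)$, which is exactly the existence algorithm you use. So your architecture (coprime splitting for existence, the Artinian-local tensor description for primariness, irreducibility for uniqueness) is the standard and correct route. However, two steps do not go through as written. The first is the termination of your induction: you claim both $(I',u)$ and $(I',v)$ have strictly fewer non-pure-power minimal generators than $I$. That holds for $(I',u)$, but fails for $(I',v)$ whenever the chosen generator $m$ involves $s\ge 3$ distinct variables, since then $v=x_{i_2}^{a_2}\cdots x_{i_s}^{a_s}$ is itself a new non-pure-power generator and the count need not drop. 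The fix is routine---induct instead on $\sum_{g\in \G(I)}\bigl(|\supp(g)|-1\bigr)$, which strictly decreases on both branches because minimal generators of $(I',u)$ and $(I',v)$ are drawn from $\G(I')\cup\{u\}$ and $\G(I')\cup\{v\}$ respectively---but as stated the induction does not close.

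The second and more serious gap is uniqueness, where you leave the key step (``the set of such corners is determined by $I$ alone'') unproved. A clean way to finish is to show that a pure-power ideal $Q=(x_{j_1}^{a_1},\ldots,x_{j_r}^{a_r})$ is prime-like with respect to intersections of monomial ideals: if $Q\supseteq J_1\cap J_2$ but $Q\not\supseteq J_1$ and $Q\not\supseteq J_2$, choose monomials $u_k\in J_k\setminus Q$; then $\deg_{x_{j_l}}(u_k)<a_l$ for every $l$ and both $k$, hence the same bounds hold for $\lcm(u_1,u_2)$, so $\lcm(u_1,u_2)\in (J_1\cap J_2)\setminus Q$, a contradiction. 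Given two irredundant decompositions $\bigcap_i Q_i=\bigcap_j Q_j'=I$ into pure-power ideals, this property (applied to the finite intersection) shows each $Q_i$ contains some $Q_{j(i)}'$ and conversely; irredundancy then forces $Q_i=Q_{j(i)}'$ and the two families coincide. This replaces the ``maximal corner'' bookkeeping entirely. Your primariness argument and the grouping of components by associated prime are fine as stated.
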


One of the algorithms to obtain the irredundant primary decomposition of a monomial ideal is given in \cite[Theorem 1.3.1]{Herzog2011}.  In the following example, we demonstrate this algorithm.

\begin{example}
Let $I=(x_1^2x_2^3,x_2^3x_3^4,x_3^4x_1^2)\subseteq k[x_1,x_2,x_3].$ First step of the algorithm starts by choosing a minimal monomial generator that is not generated by pure powers of a subset of variables. Thus we may choose any generator  of $I$ in the first step. Next steps of the decomposition follows in the same fashion.
\begin{eqnarray*}
I&=& (x_1^2x_2^3,x_2^3x_3^4,x_3^4x_1^2) \\
&=& (x_1^2,x_2^3x_3^4,x_3^4x_1^2)\cap (x_2^3,x_2^3x_3^4,x_3^4x_1^2) =(x_1^2,x_2^3x_3^4)\cap (x_2^3,x_3^4x_1^2)   \\
&=& (x_1^2,x_2^3) \cap  (x_1^2,x_3^4 ) \cap (x_2^3,x_3^4) \cap (x_2^3,x_1^2)\\
&=& (x_1^2,x_2^3) \cap  (x_1^2,x_3^4 ) \cap (x_2^3,x_3^4) . \\
\end{eqnarray*} 
Let $J=(x_1x_2,x_2x_3,x_3x_1).$ Ideals $I$ and $J$ are related through the weighting operation with the weight function $w$ such that $w_1=2,w_2=3,w_3=4.$ Furthermore, $J$ has the following irredundant primary decomposition.
$$J= (x_1,x_2) \cap  (x_1,x_3 ) \cap (x_2,x_3)$$
Note that associated primes of $I$ and $J$ are equal. Moreover, irredundant primary decomposition of $I$ is obtained from that of $J$ through weighing operation with respect to $w.$
\end{example}

In what follows, we show that the concluding remarks of the above example are true in general.  We first prove that associated primes of a monomial ideal  $I$ and its weighted ideal coincide. 

\begin{theorem}\label{ref:assprimes}
Let $I \subseteq R=k[x_1,\ldots, x_n]$ be a monomial ideal and $(I,w)$ be its weighted ideal in $R.$  Then
$$ \Ass (R/(I,w)) = \Ass (R/ I).$$ 
\end{theorem}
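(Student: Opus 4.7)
The plan is to transport the irredundant pure-power primary decomposition of $I$ from Theorem~\ref{thm:mondec} through the weighting operation. Write
$$I = \bigcap_{j=1}^k Q_j, \qquad Q_j = (x_i^{a_i^{(j)}} : i \in W_j), \qquad P_j := \sqrt{Q_j} = (x_i : i \in W_j),$$
so that $\Ass(R/I) = \{P_1,\ldots,P_k\}$. The strategy is to exhibit $\bigcap_{j=1}^k (Q_j,w)$ as an irredundant pure-power primary decomposition of $(I,w)$; then the uniqueness statement in Theorem~\ref{thm:mondec} forces $\Ass(R/(I,w)) = \{P_1,\ldots,P_k\}$.

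The key step is a lemma stating that weighting commutes with finite intersections of monomial ideals: for any monomial ideals $J_1, J_2 \subseteq R$,
$$(J_1 \cap J_2, w) \;=\; (J_1, w) \cap (J_2, w).$$
Both sides are monomial ideals, so it suffices to compare monomial generators. The ideal $J_1 \cap J_2$ is generated by the least common multiples $\mathrm{lcm}(x^{\mathbf a}, x^{\mathbf b}) = x^{\max(\mathbf a, \mathbf b)}$ with $x^{\mathbf a} \in \G(J_1)$ and $x^{\mathbf b} \in \G(J_2)$, and applying $w$ produces generators $x^{w(\max(\mathbf a, \mathbf b))}$. On the other hand $(J_1, w) \cap (J_2, w)$ is generated by $\mathrm{lcm}(x^{w(\mathbf a)}, x^{w(\mathbf b)}) = x^{\max(w(\mathbf a), w(\mathbf b))}$. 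These agree because $w_i \max(a_i, b_i) = \max(w_i a_i, w_i b_i)$ for every $w_i \ge 0$. A straightforward induction extends this to any finite intersection, yielding $(I,w) = \bigcap_{j=1}^k (Q_j, w)$.

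Each component $(Q_j, w) = (x_i^{w_i a_i^{(j)}} : i \in W_j)$ remains generated by pure powers of the variables in $W_j$, and is therefore $P_j$-primary. To verify irredundancy, fix $\ell$ and use the irredundancy of the original decomposition to pick a monomial $x^{\mathbf b} \in \bigcap_{j \ne \ell} Q_j \setminus Q_\ell$. The divisibility relations $x_i^{a_i^{(j)}} \mid x^{\mathbf b}$ for $j \ne \ell$ and $x_i^{a_i^{(\ell)}} \nmid x^{\mathbf b}$ translate, after multiplying each exponent by the corresponding $w_i$, into $x_i^{w_i a_i^{(j)}} \mid x^{w(\mathbf b)}$ and $x_i^{w_i a_i^{(\ell)}} \nmid x^{w(\mathbf b)}$. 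Hence $x^{w(\mathbf b)} \in \bigcap_{j \ne \ell}(Q_j, w) \setminus (Q_\ell, w)$, so the decomposition is irredundant. The uniqueness half of Theorem~\ref{thm:mondec} applied to $(I,w)$ now yields $\Ass(R/(I,w)) = \{P_1, \ldots, P_k\} = \Ass(R/I)$.

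I expect the main obstacle to be the intersection lemma, because everything downstream is essentially book-keeping once it is available. A secondary subtlety is that one must work with an arbitrary monomial generating set of $J_1 \cap J_2$ (the lcms), not just the minimal one, to match generators on both sides; this is harmless since the weighted ideal is insensitive to non-minimal monomial generators.
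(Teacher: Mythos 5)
Your proof is correct, but it takes a genuinely different route from the paper's. The paper argues directly with annihilators: for $\pp\in\Ass(R/I)$ written as $\pp=I:m$ with $m$ a monomial, it constructs an explicit auxiliary monomial $M$ (built from the exponents of a generator $f$ witnessing $x_im\in I$) and checks by hand that each variable of $\pp$ lies in $(I,w):mM$ while $mM\notin(I,w)$; the other inclusion it dispatches via $(I,w)\subseteq I$. You instead push the irredundant pure-power decomposition of Theorem~\ref{thm:mondec} through the weighting, with the identity $(J_1\cap J_2,w)=(J_1,w)\cap(J_2,w)$ as the engine and the uniqueness statement closing the argument. Your route has a real payoff: it proves Corollary~\ref{cor:primdecom} at the same time, and it supplies a proof of the intersection identity, which the paper invokes there without justification; your observation that $(J,w)$ is insensitive to the choice of monomial generating set is exactly the point needed to make that identity legitimate, since the lcm generators of $J_1\cap J_2$ are generally not minimal. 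The paper's computation, by contrast, is more self-contained (it never appeals to uniqueness of irredundant decompositions) and yields explicit annihilator witnesses $\pp=(I,w):mM$, which can be useful in their own right. Two small points you should make explicit: the non-divisibility half of your irredundancy check ($b_i<a_i^{(\ell)}$ implying $w_ib_i<w_ia_i^{(\ell)}$) and the fact that each $(Q_j,w)$ remains a proper $P_j$-primary ideal both require $w_i\geq 1$; this positivity is implicitly assumed throughout the paper (the theorem is false for zero weights), so it is not a gap, but it deserves a sentence.
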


\begin{proof}
One side of the inclusion $\Ass (R/ (I,w)) \subseteq \Ass (R/ I)$ is immediate from the fact that  $(I,w) \subseteq I.$ For the reverse inclusion, let  $\pp \in \Ass (R/ I).$ Then there exists a monomial $m \in R$ such that $\pp = I:m.$ Note that $\pp$ is generated by a subset of variables in $R=k[x_1,\ldots, x_n]$ and let $x_i$ be a variable in $\pp,$ set $i=1.$ Then $x_1m \in  I.$ Let  $ f \in \G(I)$ such that 
\begin{eqnarray*}
x_1m &=& f m' 
\end{eqnarray*}
 for a monomial $m' $ in  $R.$ 
 
Let $f= x^{\textbf{f}}, m=x^{\textbf{m}},$ and $m'=x^{\textbf{m'}}$ where $\textbf{f}= (f_1,\ldots, f_n),  \textbf{m}= (m_1,\ldots, m_n), \textbf{m}'= (m'_1, \ldots, m'_n) \in \NN^n.$ It is clear that $x_1$ divides $f$ and $m'$ is not divisible by $x_1.$  Otherwise, $m \in I,$ a contradiction. Thus, we have $f_1\geq 1$ and $m'_1=0.$  Then
 \begin{eqnarray*}
x_1^{m_1} \cdots x_n^{m_n} &=& x_1^{f_1-1}  x_2^{f_2+m'_2} \cdots x_n^{f_n+m'_n} 
\end{eqnarray*}
 where $m_1=f_1-1$ and $m_i= f_i+m'_i$ for each $i \in \{ 2,\ldots, n\}.$ 
 
 
Let $F= x_1^{w_1f_1} \ldots x_n^{w_nf_n}$ be the minimal generator of $(I,w)$ corresponding to $f\in \G(I)$ and set  $M= \prod_{i=1}^n x_{i}^{w_if_i-f_i}.$ It follows from the construction of $M$ that $ x_1mM = F m' \in (I,w).$

 We wish to show that $mM \notin (I,w).$  On the contrary, suppose $mM \in  (I,w).$ Then there exists $x^{w(\textbf{b})} \in \G((I,w))$ where $x^{\textbf{b}} \in \G(I)$ such that $x^{w(\textbf{b})} $  divides 
\begin{eqnarray*}
mM&=&x_1^{w_1f_1-1} x_2^{w_2f_2+m'_2}  \cdots x_n^{w_nf_n+ m'_n}.
 \end{eqnarray*} 

This means $w_1b_1 \leq w_1f_1-1 <w_1f_1 $ and $w_ib_i  \leq w_if_i+ m'_i \leq w_if_i+ w_im'_i $ for each $i \in\{2,\ldots, n\}.$ Since  $w_i \geq 1$ for each $i,$ we conclude that $  b_1\leq f_1 -1 =m_1$ and $ b_i \leq f_i+m'_i=m_i.$ It implies that $x^{\textbf{b}}$ divides $m$ and $m$ must be  in $I,$ a contradiction. Therefore, variable $x_1$ is  in $(I,w): mM$ for any  $x_1$ in $\pp.$ Thus $\pp \in \Ass (R/ (I,w)),$ proving the reverse inclusion.
\end{proof}

This result was also proved in \cite[Lemma 3.9]{sayedsadeghi2018normally} by constructing multiple auxiliary lemmas and considering primary decompositions as a starting point. In contrast, we first establish the equality of associated primes by solely using the structure of associated primes and the weighted ideal. In addition, our proof is considerably more compact. 

Using the equality of associated primes as a base, we can conclude results in regards to primary decompositions and normal torsion freeness.

\begin{corollary}\label{cor:primdecom}
Let $I \subseteq R=k[x_1,\ldots, x_n]$ be a monomial ideal and $(I,w)$ be its weighted ideal in $R.$ If $I= \bigcap_{i=1}^k Q_i$ is the irredundant primary decomposition of $I,$  then  $(I,w)= \bigcap_{i=1}^k (Q_i,w)$ is the irredundant primary decomposition of $(I,w).$ 
\end{corollary}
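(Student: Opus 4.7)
The plan is to reduce the statement to two separate assertions: the set-theoretic equality $(I,w) = \bigcap_{i=1}^k (Q_i, w)$, and the irredundancy of the resulting decomposition. The first is a combinatorial fact about how the weighting operation interacts with intersection of monomial ideals, and the second will follow from the structural description of primary decompositions of monomial ideals in Theorem \ref{thm:mondec} combined with the equality of associated primes established in Theorem \ref{ref:assprimes}.

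To establish $(I,w) = \bigcap_{i=1}^k (Q_i, w)$, the key observation is that weighting commutes with least common multiples of monomials. Writing $m = x^{\textbf{a}}$ and $n = x^{\textbf{b}}$, one has $\lcm(m,n) = x^{\max(\textbf{a},\textbf{b})}$ coordinate-wise. Since each $w_i \geq 1$, multiplication by $w_i$ preserves order, so $w_i\max(a_i,b_i) = \max(w_i a_i, w_i b_i)$ and therefore $w(\lcm(m,n)) = \lcm(w(m), w(n))$. I would first check that the weighted ideal $(A,w)$ is well-defined on any monomial ideal $A$ independently of the chosen generating set (this follows because $w$ is multiplicative: $w(m \cdot m') = w(m)\,w(m')$ on monomials). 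Combined with the standard description $A \cap B = (\lcm(m,n) : m \in \G(A),\, n \in \G(B))$ for monomial ideals, the lcm-compatibility above gives $(A \cap B, w) = (A,w) \cap (B,w)$, and iterating across the $k$ components of the decomposition of $I$ yields the desired equality.

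For irredundancy, Theorem \ref{thm:mondec} says each $Q_i$ is generated by pure powers of a subset of variables, say $Q_i = (x_{j_1}^{a_1}, \ldots, x_{j_r}^{a_r})$. Then $(Q_i,w) = (x_{j_1}^{w_{j_1} a_1}, \ldots, x_{j_r}^{w_{j_r} a_r})$ is of the same form, hence $(Q_i,w)$ is primary and $\sqrt{(Q_i,w)} = (x_{j_1},\ldots, x_{j_r}) = \sqrt{Q_i}$. Because $I = \bigcap Q_i$ is irredundant, the primes $\sqrt{Q_1},\ldots,\sqrt{Q_k}$ are pairwise distinct and equal $\Ass(R/I)$. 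Applying Theorem \ref{ref:assprimes} gives $\Ass(R/(I,w)) = \Ass(R/I)$, so the decomposition $(I,w) = \bigcap_{i=1}^k (Q_i,w)$ consists of exactly $k$ primary ideals whose associated primes are pairwise distinct and exhaust $\Ass(R/(I,w))$; by the uniqueness clause of Theorem \ref{thm:mondec}, this is the irredundant primary decomposition of $(I,w)$. The only real obstacle is verifying that weighting commutes with intersection — once that lcm-compatibility is in place, the rest of the proof is combinatorial bookkeeping on top of the already-established equality of associated primes.
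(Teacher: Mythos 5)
Your proof is correct and follows essentially the same route as the paper's: establish $((J\cap K),w)=(J,w)\cap(K,w)$, note that each $(Q_i,w)$ is primary with the same radical as $Q_i$, and invoke the equality of associated primes from \Cref{ref:assprimes} to get irredundancy. The only difference is that you actually supply the lcm/divisibility argument for the intersection identity, which the paper's proof states without justification.
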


\begin{proof}
Let $I= \bigcap_{i=1}^k Q_i$ be the irredundant primary decomposition of $I,$  where $Q_i$ is a $P_i$-primary ideal for each $i$ and  $P_i \in \Ass(R/I).$  Note that $( (J \cap K) ,w )= (J,w) \cap (K,w)$ for any two monomial ideals in $R.$ Thus, we have  $(I,w)= \bigcap_{i=1}^k (Q_i,w)$ where each $(Q_i,w)$ is a $P_i$-primary ideal for each $i.$  Since $ \Ass (R/(I,w)) = \Ass (R/ I),$ the decomposition $(I,w)= \bigcap_{i=1}^k (Q_i,w)$ is the irredundant primary decomposition of $(I,w).$ 
\end{proof}

Another important notion of an ideal is normal torsion freeness which investigates associated primes of an ideal and its powers.

 \begin{definition}
An ideal  $I\subseteq R$ is called \emph{normally torsion free} if  $\Ass (R/I^k) \subseteq \Ass (R/I)$ for all $k \geq 1.$
 \end{definition}
 
One can immediately see from \Cref{ref:assprimes} that normally-torsion freeness is preserved under the weighting operation.
 
\begin{corollary}\label{cor:normaltorsion}
Let $I \subseteq R=k[x_1,\ldots, x_n]$ be a monomial ideal and $w$ be a weight function on the variables of $R.$  Then $I$ is normally torsion free if and only if $(I,w)$ is normally torsion free.
\end{corollary}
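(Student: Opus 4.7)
The plan is to reduce the statement to Theorem~\ref{ref:assprimes} via a single algebraic identity: for every monomial ideal $I$ and every integer $k \ge 1$, one has $(I,w)^k = (I^k,w)$. Granting this identity, both sides of the ``normally torsion free'' condition for $(I,w)$ translate directly into the corresponding statements for $I$, and the equivalence falls out.

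First I would verify $(I^k,w) = (I,w)^k$ at the level of generators. If $\G(I) = \{x^{\mathbf{a}_1},\dots,x^{\mathbf{a}_r}\}$, then $I^k$ is generated by the monomials $x^{\mathbf{a}_{i_1}+\cdots+\mathbf{a}_{i_k}}$ as the $k$-tuples $(i_1,\dots,i_k)$ range over $\{1,\dots,r\}^k$. Since the weighting map $\mathbf{b}\mapsto w(\mathbf{b}) = (w_1 b_1,\dots,w_n b_n)$ is coordinate-wise additive, one has $w(\mathbf{a}_{i_1}+\cdots+\mathbf{a}_{i_k}) = w(\mathbf{a}_{i_1})+\cdots+w(\mathbf{a}_{i_k})$. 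Hence each generator $x^{w(\mathbf{a}_{i_1}+\cdots+\mathbf{a}_{i_k})}$ of $(I^k,w)$ (as given by Definition~\ref{def:wideal}) coincides with the product $x^{w(\mathbf{a}_{i_1})}\cdots x^{w(\mathbf{a}_{i_k})}$, which is a generator of $(I,w)^k$. So the two ideals share a generating set, and therefore coincide.

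Next, I would apply Theorem~\ref{ref:assprimes} twice: once to $I$ itself to obtain $\Ass(R/(I,w)) = \Ass(R/I)$, and once to the monomial ideal $I^k$ with the same weight function $w$ to obtain $\Ass(R/(I^k,w)) = \Ass(R/I^k)$. Combining the second equality with the identity $(I^k,w) = (I,w)^k$ yields
\[
\Ass\bigl(R/(I,w)^k\bigr) \;=\; \Ass(R/I^k) \qquad \text{for every } k \ge 1.
\]
Therefore, the containment $\Ass(R/I^k) \subseteq \Ass(R/I)$ for all $k\ge 1$ is equivalent to $\Ass\bigl(R/(I,w)^k\bigr) \subseteq \Ass\bigl(R/(I,w)\bigr)$ for all $k\ge 1$, which is exactly the statement that $I$ is normally torsion free if and only if $(I,w)$ is.

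There is no substantial obstacle here: the entire argument rides on Theorem~\ref{ref:assprimes} together with the essentially formal identity $(I^k,w) = (I,w)^k$, which just reflects the additivity of $w$ on exponent vectors. The only place requiring any care is confirming that weighting commutes with taking powers, but this is immediate from the description of minimal generators in Definition~\ref{def:wideal}, so the proof should be quite short.
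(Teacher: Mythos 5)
Your argument is correct and follows essentially the same route the paper intends: the paper's proof consists of the single assertion that the corollary is immediate from \Cref{ref:assprimes}, and the identity $(I^k,w)=(I,w)^k$ that you verify is exactly the link needed to make that assertion precise. One small point to tighten: \Cref{def:wideal} weights the \emph{minimal} generators of $I^k$, which may be a proper subset of the $k$-fold products you list, but this is harmless because weighting preserves divisibility of monomials in both directions (as all $w_i\ge 1$), so any monomial generating set produces the same weighted ideal.
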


One of the interesting questions in commutative algebra is to determine when the ordinary and symbolic powers of a given ideal coincide. In general, this question is open. In some known cases, there are conditions on the ideal that are equivalent to the equality of the ordinary and symbolic powers. We recall one such condition below (see \cite{Herzog2011} for definition of symbolic power of an ideal).

\begin{proposition}\cite[Proposition 3.3.26]{villarreal2000monomial} \label{prop:power}
Let $I$ be an ideal of a ring $R.$ If $I$ has no embedded primes, then $I$ is normally torsion free if and only if $I^s=I^{(s)}$ for all $s \geq 1.$
\end{proposition}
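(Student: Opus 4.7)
The plan is to deduce the equivalence by passing through the structural characterization $I^s = I^{(s)}$ iff the primary decomposition of $I^s$ has no embedded components. Recall that by definition $I^{(s)} = \bigcap_{P \in \Min(I)} (I^s R_P \cap R)$, so $I^{(s)}$ is obtained from an irredundant primary decomposition of $I^s$ by discarding all primary components whose associated primes do not lie in $\Min(I)$. In particular, $I^s$ and $I^{(s)}$ have the same $P$-primary components for each $P \in \Min(I) = \Min(I^s)$ (using that $I$ and $I^s$ have the same radical), and the two ideals agree if and only if there are no further, embedded components, that is, iff $\Ass(R/I^s) \subseteq \Min(I)$.

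With this reformulation in hand, I would prove both implications in one line. Throughout we use the standing hypothesis $\Ass(R/I) = \Min(I)$, which holds precisely because $I$ has no embedded primes.

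For the forward direction, suppose $I$ is normally torsion free, so that $\Ass(R/I^s) \subseteq \Ass(R/I) = \Min(I)$ for every $s \geq 1$. Since the minimal primes are always contained in the associated primes, we in fact obtain $\Ass(R/I^s) = \Min(I^s) = \Min(I)$, whence $I^s = I^{(s)}$ by the first paragraph. Conversely, if $I^s = I^{(s)}$ for all $s \geq 1$, then $\Ass(R/I^s) \subseteq \Min(I) = \Ass(R/I)$ for all $s$, which is exactly normal torsion freeness.

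The argument is essentially bookkeeping with primary decompositions, so there is no serious obstacle; the only delicate point is to be clear that the definition $I^{(s)} = \bigcap_{P \in \Min(I)} (I^s R_P \cap R)$ identifies the symbolic power with the intersection of precisely those primary components of $I^s$ corresponding to minimal primes of $I$, and that $\Min(I) = \Min(I^s)$ because $\sqrt{I} = \sqrt{I^s}$. Once these two facts are recorded, the equivalence drops out immediately from the no-embedded-primes hypothesis on $I$.
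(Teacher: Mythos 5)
Your argument is correct. Note that the paper itself offers no proof of this proposition---it is quoted verbatim from Villarreal with a citation---and your reduction to the statement ``$I^s=I^{(s)}$ iff $\Ass(R/I^s)\subseteq\Min(I)$,'' followed by the observation that the no-embedded-primes hypothesis gives $\Ass(R/I)=\Min(I)$, is exactly the standard bookkeeping argument one finds in the cited source. The only points worth making explicit in a written version are the two facts you already flag (that the isolated components of $I^s$ at primes of $\Min(I)=\Min(I^s)$ are precisely the localizations $I^sR_P\cap R$, and that $\Min(I)\subseteq\Ass(R/I^s)$ always, so the converse direction needs no hypothesis); with those recorded, the proof is complete.
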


\begin{remark}\label{rem:torsionfree}
It follows from \Cref{ref:assprimes} and \Cref{cor:primdecom} that embedded primes of $I$ and $(I,w)$ coincide. Thus, when $I$ has no embedded primes, we obtain another class of ideals, $(I,w)$ for any weight function $w,$ in which  equality of ordinary and symbolic powers  is equivalent to normal torsion freeness. 
\end{remark}

In the class of edge ideals, there is a nice combinatorial characterization (\cite[Theorem 5.9]{SIMIS1994})  for normal torsion freeness of edge ideals (i.e., equality of ordinary and symbolic powers). In the following example, we consider edge ideals of vertex-weighted graphs and accomplish the same characterization for this class of ideals. 

\begin{example}
Let $G=(V,E)$ be a graph with the vertex set $V=\{x_1,\ldots, x_n\}$ and edge set $E.$ Edge ideal of $G$ in $R=k[x_1,\ldots, x_n]$ is denoted by $I(G)$ and it is defined as
$$I(G) = (x_ix_j  ~:~ \{x_i,x_j\} \in E).$$

It is well-known that associated primes of $I(G)$ are related to minimal vertex covers of $G.$ In particular, let $\mathcal{C}$ be the set of minimal vertex covers of $G,$ then  $I(G) = \bigcap_{F \in \mathcal{C}} Q_{F}$  is the irredundant primary decomposition  of $I(G)$ where $Q_{F}$ is the primary ideal $(x_i ~:~ x_i \in  F).$ Moreover,  $I(G)$ has no embedded primes since $\Ass(R/I (G)) = \{ (x_i : x_i \in  F) :  F \in \mathcal{C}\}.$ 

Let $w$ be a weight function on the set of variables of $R$ which is also a weight function defined on the vertices of $V.$ 
Consider the vertex-weighted graph $G$ with the weight function $w$ and denote it by $(G,w).$ Similar to the unweighted case, one can define the edge ideal of a vertex-weighted graph as follows
$$I(G,w)= (x_i^{w_i}x_j^{w_j}  ~:~ \{x_i,x_j\} \in E).$$

It is clear that $I(G,w) = (I(G),w)$ and, if $w_i=1$ for each $i,$  we have $I(G,w)=I(G).$

As an application of  \Cref{ref:assprimes}, we see that associated primes of $I(G,w) $ come from the minimal vertex covers of $G$ and $I(G,w)$ has no embedded primes.

Normally torsion free  edge ideals were classified  in \cite[Theorem 5.9]{SIMIS1994} where it was shown that  $G$ is bipartite if and only if its edge ideal $I(G)$ is normally torsion free, equivalently, $I(G)^s$ has no embedded primes for any $s \geq 1.$ Thus, we obtain the following weighted analogue of \cite[Theorem 5.9]{SIMIS1994} as a result of \Cref{cor:normaltorsion}. 
\end{example}

\begin{center}
\begin{tikzcd}
  G \text{ is bipartite }  \arrow[r]   \arrow[d ]
    &  I(G) \text{ is normally torsion free }   \arrow[l]   \arrow[d]\\
  (G,w) \text{ is bipartite }   \arrow[u]   
&  I(G,w) \text{ is normally torsion free } \arrow[u]   
\end{tikzcd}
\end{center}

Therefore, by \Cref{prop:power}, we have $ I(G,w)^s =  I(G,w)^{(s)}$ for all $s \geq 1$ if and only if $G$ is bipartite. 

We conclude the paper with the following questions.

\begin{question}
As concluded in \Cref{cor:equalities}, projective dimension and depth of a monomial ideal and its weighted ideal are equal. Another important algebraic invariant, in general, is the Castelnuovo-Mumford regularity. What kind of relations can be established for the regularity of a monomial ideal and its weighted ideal?
\end{question}

\begin{question}
We have shown in \Cref{cor:properties} that vertex-decomposability, shellability, constructibility are preserved under the mixed wreath product construction. What other properties are preserved under this construction?
\end{question}

%
%

\bibliographystyle{plain}
\bibliography{references}

\end{document}